\theoremstyle{plain}
\newtheorem{thm}{Theorem}[section]  
\newtheorem{cor}[thm]{Corollary}
\newtheorem{lem}[thm]{Lemma}
\newtheorem{prop}[thm]{Proposition}
\theoremstyle{definition}
\newtheorem{defn}[thm]{Definition}
\theoremstyle{remark}
\newtheorem{remark}[thm]{Remark}
\theoremstyle{problem}
\newtheorem{problem}[thm]{Problem}
\newcommand{\R}{\mathbb{R}}
\newcommand{\U}{\mathcal{U}}
\newcommand{\G}{\mathcal{G}}
\DeclareMathOperator{\supp}{supp}
\DeclareMathOperator{\ims}{ims}
\DeclareMathOperator{\Lev}{Lev}
\DeclareMathOperator{\spanne}{span}
\DeclareMathOperator{\htte}{ht}
\DeclareMathOperator{\cf}{cf}
\DeclareMathOperator{\Clop}{Clop}
\newcommand{\vertiii}[1]{{\left\vert\kern-0.25ex\left\vert\kern-0.25ex\left\vert #1 
    \right\vert\kern-0.25ex\right\vert\kern-0.25ex\right\vert}}
\title{On compact trees with the coarse wedge topology}
\author{Jacopo Somaglia\footnote{Research was supported in part by the 
Universit\`a degli Studi of Milano (Italy), in part by the Gruppo Nazionale per l'Analisi Matematica, la Probabilit\`a e le loro Applicazioni (GNAMPA) of the Istituto Nazionale di Alta Matematica (INdAM) of Italy and in part by the research grant GA\v{C}R 17-00941S.}}
\date{}
\begin{document}
\maketitle

\begin{abstract}
\noindent
In the present paper we investigate the class of compact trees, endowed with the coarse wedge topology, in the area of non-separable Banach spaces. We describe Valdivia compact trees in terms of inner structures and we characterize the space of continuous functions on them. Moreover we prove that the space of continuous functions on an arbitrary tree with height less than $\omega_1\cdot\omega_0$ is Plichko.\\
\\ 
\noindent
\textit{MSC:} 46B26, 46A50, 54D30, 06A06.\\
\\
\noindent
\textit{Keywords:}  tree, coarse wedge topology, Valdivia compacta, Plichko spaces.
\end{abstract}

\section{Introduction}

A \textit{tree} is a partially ordered set $(T,<)$ such that the set of predecessors $\{s\in T:s<t\}$ of any $t\in T$ is well-ordered by $<$. There are several natural topologies that can be defined by using the order structures of trees, see \cite{Nyikos2}. Among them, the coarse wedge topology is a topology for which the tree $T$ is a compact Hausdorff space whenever $T$ satisfies certain structural properties.\\
In the present paper we investigate the relations between the coarse wedge topology and both classes of Valdivia compacta and Plichko Banach spaces. Plichko spaces are a wide class of Banach spaces that extend the class of weakly Lindel\"{o}f determined (WLD) Banach spaces. It was introduced in \cite{Plichko1} and was studied under equivalent definitions in \cite{DeviGode}, \cite{Vald1} and \cite{Vald2}; we refer to \cite{Kalenda2} for a detailed survey in this area. Plichko spaces and the related class of compact spaces, the so called Valdivia compacta, appear in many different areas, see \cite{Kalenda3} for the details and  \cite{CuthFab}, \cite{BohaHamKal}, \cite{CorreaTausk} for some recent results in Banach spaces, $C^*$-algebras and topology.\\
W. Kubi\'{s} introduced the concept of projectional skeletons in \cite{Kubis1}, where he adapted the definition of retractional skeleton (see \cite{KubisMicha}, \cite{Cuth1}) from the topological setting to Banach spaces. Roughly speaking, a projectional skeleton decomposes the Banach space into smaller separable subspaces, see \cite{Cuth2}, \cite{CuthFab}, \cite{Kalenda8}. Banach spaces (compact spaces) with a projectionl skeleton (retractional skeleton, respectively) are the non-commutative counterpart of the aforementioned classes, in the sense that a Banach space (compact space) is Plichko (Valdivia) if and only if it admits a commutative projectional skeleton (commutative  retractional skeleton, respectively). Although Plichko spaces and Banach spaces with a projectional skeleton, as well as Valdivia compacta and compact spaces with a retractional skeleton, share many structural and topological properties, they do not coincide. A simple example of non Valdivia compact space with rectractional skeleton is the compact ordinal interval $[0,\omega_2]$, \cite{KubisMicha}. It is more difficult to prove that the Banach space $C([0,\omega_2])$, which has a projectional skeleton, is not Plichko \cite{Kalenda7}.\\
In \cite{SomaTree} we studied the class of trees endowed with the coarse wedge topology, providing new examples of non Valdivia compact spaces with retractional skeletons. In the same paper it was proved that every tree with height less or equal than $\omega_1 +1$ is Valdivia and no Valdivia tree has height greater than $\omega_2$. Moreover, an example of non Valdivia tree with height $\omega_1 +2$ was given. In the present paper we follow the same research line. In particular we investigate the space of continuous functions on compact trees. We prove that $C(T)$ is Plichko whenever the height of $T$ is less than $\omega_1\cdot \omega_0$. Finally we extend Theorem 4.1 of \cite{SomaTree}, characterizing Valdivia compact trees with height less than $\omega_2$. It turns out that this characterization depends only on the behaviour of the tree on the levels with uncountable cofinality.\\
We now outline how the paper is organized. In the remaining part of the introductory section notation and basic notions addressed in this paper are given. Section 2 contains details of notation, basic definitions and some preliminary results on trees. Section 3 is devoted to characterizing Valdivia compact trees with height less than $\omega_2$. Section 4 deals with the class of continuous functions on a compact tree. It is shown that, if $C(T)$ is $1$-Plichko, then $T$ is Valdivia. We also prove that, if $T$ is an arbitrary tree with height less than $\omega_1\cdot \omega_0$, then $C(T)$ is a Plichko space.\\
We denote with $\omega_0$ the set of natural numbers (including 0) with the usual order. Given a set $X$ we denote by $|X|$ the cardinality of the set $X$, by $[X]^{\leq \omega_0}$ the family of all countable subsets of $X$.\\
All the topological spaces are assumed to be Hausdorff and completely regular. Given a topological space $X$ we denote by $\overline{A}$ the closure of $A\subset X$. We say that $A\subset X$ is countably closed if $\overline{C}\subset A$ for every $C\in [A]^{\leq \omega_0}$.\\
Given a topological compact space (loc. compact space) $K$ we use $C(K)$ ($C_0(K)$) to denote the space of all real-valued or complex-valued continuous functions on $K$ (all real-valued or complex-valued continuous functions on $K$ vanishing at infinity) with the usual norm. By the Riesz representation theorem the elements of $C(K)^{*}$ are considered as measures. If $\mu\in C(K)^*$, we denote by $\|\mu\|$ its norm. If $\mu$ is a non-negative measure, we denote by supp$(\mu)$ the support of the measure $\mu$, i.e. the set of those points $x\in K$ such that each neighborhood of $x$ has positive $\mu$-measure. The support of a measure $\mu\in C(K)^*$ coincides with the support of its total variation $|\mu|$.\\
Given a Banach space $X$ and a subset $A\subset X$ we denote by $\spanne(A)$ the linear hull of $A$. $B_{X}$ is the norm-closed unit ball of $X$ (i.e. the set $\{x\in X:\, \|x\|\leq 1\}$). As usual $X^{*}$ stands for the (topological) dual space of $X$. A set $D\subset X^{*}$ is said $\lambda$-norming if 
\begin{equation*}
\|x\|\leq \lambda \sup\{|x^{*}(x)|:\,x^{*}\in D\cap B_{X^{*}}\}
\end{equation*}
for every $x\in X$. We say that a set $D\subset X^{*}$ is \textit{norming} if it is \textit{$\lambda$-norming} for some $\lambda\geq 1$. A subspace $S\subset X^*$ is called a \textit{$\Sigma$-subspace} of $X^*$ if there is a set $M\subset X$ such that $\overline{\spanne}(M)=X$ and that
\begin{equation*}
S=\{f\in X^*:\{m\in M:f(m)\neq 0\} \mbox{ is countable}\}.
\end{equation*}
A Banach space $X$ is called \textit{Plichko} (\textit{$\lambda$-Plichko}) \textit{space} if $X^*$ admits a norming ($\lambda$-norming, respectively) $\Sigma$-subspace. Let $\Gamma$ be an arbitrary set, we put
\begin{equation*}
\Sigma(\Gamma)=\{x\in \R^{\Gamma}:|\{\gamma\in \Gamma:x(\gamma)\neq 0\}|\leq \omega_0\}.
\end{equation*}
Let $K$ be a compact space, we say that $A\subset K$ is a $\Sigma$-\textit{subset} of $K$ if there exists a homeomorphic injection $h$ of $K$ into some $\R^{\Gamma}$ such that $h(A)=h(K)\cap\Sigma(\Gamma)$. $K$ is called \textit{Valdivia compact space} if $K$ has a dense $\Sigma$-subset. 

\section{Basic notions on trees}

We recall that a \textit{tree} is a partially ordered set $(T,<)$ such that the set of predecessors $\{s\in T:s<t\}$ of any $t\in T$ is well-ordered by $<$. A tree $T$ is said to be \textit{rooted} if it has only one minimal element, called \textit{root}. A totally ordered subset of $T$ is called a \textit{chain} of the tree $T$. A maximal chain is said \textit{branch}. $T$ is called \textit{chain complete} if every chain has a supremum. For any element $t\in T$, $\htte(t,T)$ denotes the order type of $\{s\in T:s<t\}$. For any ordinal $\alpha$, the set $\Lev_{\alpha}(T)=\{t\in T:\htte(t,T)=\alpha\}$ is called the $\alpha$\textit{th level} of $T$. The height of $T$ is denoted by $\htte(T)$, and it is the least $\alpha$ such that $\Lev_{\alpha}(T)=\emptyset$. For an element $t\in T$, $\cf(t)$ denotes the cofinality of $\htte(t,T)$, where $\cf(t)=0$ when $\htte(t,T)$ is a successor ordinal, and $\ims(t)=\{s\in T:t<s, \,\htte(s,T)=\htte(t,T)+1\}$ denotes the set of \textit{immediate successors} of $t$. Given a subset $S$ of a tree $T$, and an element $t\in S$, we denote by $\ims_{S}(t)$ the set of immediate successors of $t$ in $S$ with the inherited order. Let $T$ be a tree of height $\alpha$, for each $\beta<\alpha$ we denote by $T_{\beta}=\bigcup_{\gamma\leq \beta}\Lev_{\gamma}(T)$ and by $T_{<\beta}=\bigcup_{\gamma< \beta}\Lev_{\gamma}(T)$.\\
For $t\in T$ we put $V_t=\{s\in T:s\geq t\}$ and $\hat{t}=\{s\in T:s\leq t\}$. In the present work we consider $T$ endowed with the \textit{coarse wedge topology}. We mention that the coarse wedge topology coincides with the path topology on the set of all initial chains of a tree $T$ which is a tree itself when ordered by inclusion, we refer to \cite{Todo1},\cite{Gruen2} for the details.\\
The coarse wedge topology on a tree $T$ is the one whose subbase is the set of all $V_t$ and their complements, where $t$ is  either minimal or on a successor level. If ht$(t,T)$ is a successor or $t$ is the minimal element, a local base at $t$ is formed by sets of the form
\begin{equation*}
W_{t}^{F}=V_t\setminus \bigcup\{V_s:s\in F\},
\end{equation*}
where $F$ is a finite set of immediate successors of $t$. In case ht$(t,T)$ is limit, a local base at $t$ is formed by sets of the form
\begin{equation*}
W_{s}^{F}=V_s\setminus \bigcup\{V_r:r\in F\},
\end{equation*}
where $s<t$, ht$(s,T)$ is a successor and $F$ is a finite set of immediate successor of $t$. We refer to \cite{Todo3} and \cite{Nyikos2} for further information.\\
Since we are interested in compact spaces, we recall that, by \cite[Corollary 3.5]{Nyikos2}, a tree  $T $ is compact Hausdorff in the coarse wedge topology if and only if $T$ is chain complete and has finitely many minimal elements. For this reason, from now on we will consider only chain complete trees with a unique minimal element. In these settings the operation $t\wedge s=\max(\hat{t}\cap\hat{s})$ is well-defined for every $s,t\in T$.\\
Given a subset $S$ of a tree $T$, there are two natural topologies on $S$: the subspace topology and the coarse wedge topology generated by the inherited order. We shall prove that these topologies sometimes coincide.

\begin{lem}\label{Subspacetopology}
Let $S$ be closed subset of a tree $T$. Suppose that $S$ is closed under $\wedge$ (i.e. if $s,t\in S$, then $s\wedge t\in S$). Then the subspace topology coincides with the coarse wedge topology on $S$.
\end{lem}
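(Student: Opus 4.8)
The plan is to compare the generating subbases of the two topologies on $S$, using that $S$ is closed to keep control of how the height of a point changes on passing from $T$ to $S$. I would begin with some preliminaries: the inherited order makes $S$ a tree, and $S$ has a unique minimal element, since if $s_1,s_2$ were both minimal in $S$, then $s_1\wedge s_2\in S$ with $s_1\wedge s_2\leq s_1,s_2$ forces $s_1=s_1\wedge s_2=s_2$. The key fact, and the only place where closedness is really used, is the claim $(\ast)$: \emph{if $x\in S$ lies on a limit level of $S$, then $\mu:=\sup\{s\in S:s<x\}$, computed in $T$, belongs to $S$ and equals $x$; in particular $\htte(x,T)$ is a limit ordinal.} To see this, note that $P:=\{s\in S:s<x\}$ is a nonempty chain with no maximum, so $\mu=\sup P$ exists in $T$ by chain completeness, $\mu\leq x$, $\mu\notin P$, and $\htte(\mu,T)$ is a limit ordinal (otherwise the immediate predecessor of $\mu$ would be an upper bound of $P$ strictly below $\mu$). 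A routine inspection of the coarse wedge neighbourhoods of $\mu$ shows that $P$ converges to $\mu$ in $T$, so $\mu\in\overline{P}\subseteq S$; and $\mu<x$ cannot hold, as it would place $\mu$ in $P$. (The same argument without the level bookkeeping shows that $S$ is chain complete.)

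Next, the subspace topology $\sigma$ on $S$ has a subbase consisting of the sets $V_t\cap S$ and their complements in $S$, for $t$ minimal or on a successor level of $T$; while the coarse wedge topology $\rho$ on $S$ has a subbase consisting of the sets $V^S_s:=V_s\cap S$ and their complements in $S$, for $s$ minimal or on a successor level of $S$. Each such generator is clopen in its own topology, so it is enough to show that every $V_t\cap S$ of the first kind is $\rho$-clopen and every $V^S_s$ of the second kind is $\sigma$-clopen. For $\rho\subseteq\sigma$: let $s$ be minimal or on a successor level of $S$. If $s$ is the minimal element of $S$, then $V^S_s=S$ is $\sigma$-clopen. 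Otherwise $s$ has an immediate $S$-predecessor $s^-$. If $\htte(s,T)$ is not a limit ordinal, then $s$ is minimal or on a successor level of $T$, so $V_s$ is clopen in $T$ and $V^S_s=V_s\cap S$ is $\sigma$-clopen. If $\htte(s,T)$ is a limit ordinal, let $r_0$ be the unique element of $\hat{s}\cap\ims(s^-)$; then $s^-<r_0<s$ (the last inequality because $\htte(s,T)$ is limit), and I claim $V^S_s=V_{r_0}\cap S$. Indeed the inclusion from left to right is immediate, and conversely, if $u\in S$ and $u\geq r_0$, then $u\wedge s\in S$ with $s^-<r_0\leq u\wedge s\leq s$, so $u\wedge s=s$ because no element of $S$ lies strictly between $s^-$ and $s$, i.e.\ $u\geq s$. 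As $r_0$ is on a successor level of $T$, $V_{r_0}$ is clopen in $T$, so again $V^S_s$ is $\sigma$-clopen.

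For $\sigma\subseteq\rho$: fix $t$ minimal or on a successor level of $T$ with $V_t\cap S\neq\emptyset$ (otherwise there is nothing to prove). The set $V_t\cap S$ is closed under $\wedge$ (if $s_1,s_2\in S$ are $\geq t$, then $t\leq s_1\wedge s_2\in S$), so, fixing $s_0\in V_t\cap S$, the chain $\{s_0\wedge s:s\in V_t\cap S\}$ lies in the well-ordered set $\hat{s_0}$ and therefore has a least element, which is readily seen to be $\pi(t):=\min(V_t\cap S)$; hence $V_t\cap S=V^S_{\pi(t)}$. It remains to show that $\pi(t)$ is minimal or on a successor level of $S$. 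If not, $\pi(t)$ is on a limit level of $S$, so by $(\ast)$ we have $\sup\{s\in S:s<\pi(t)\}=\pi(t)$ and $\htte(\pi(t),T)$ is a limit ordinal; since $t\leq\pi(t)$ and $t$ is not on a limit level of $T$, this gives $t<\pi(t)$. But then every $s\in S$ with $s<\pi(t)$ is comparable to $t$ and must satisfy $s<t$, for $t\leq s$ would yield $\pi(t)\leq s<\pi(t)$; hence $t$ is a strict upper bound of $\{s\in S:s<\pi(t)\}$, contradicting $\sup\{s\in S:s<\pi(t)\}=\pi(t)>t$. Therefore $V_t\cap S=V^S_{\pi(t)}$ is $\rho$-clopen, and both inclusions are established.

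I expect the main obstacle to be exactly this mismatch between the $T$-height and the $S$-height of a point: a priori a set $V_t$ based at a successor level of $T$ could coincide, inside $S$, with a set $V^S_{\pi(t)}$ whose base $\pi(t)$ lies on a limit level of $S$, and dually a set $V^S_s$ based at a successor level of $S$ could have $\htte(s,T)$ a limit ordinal. Claim $(\ast)$ — whose proof is the sole use of closedness of $S$ — together with the hypothesis that $S$ is $\wedge$-closed is precisely what excludes both situations. After that, the argument is just routine bookkeeping with subbases.
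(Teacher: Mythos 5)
Your proof is correct, but it follows a genuinely different route from the paper's. The paper proves only one inclusion directly --- that the coarse wedge topology $\rho$ of $S$ is coarser than the subspace topology $\sigma$, via a case analysis on local bases at each point --- and then obtains the reverse inclusion for free by noting that $S$ is compact Hausdorff in both topologies ($\sigma$ because $S$ is closed in the compact space $T$; $\rho$ because $S$ is chain complete and rooted, invoking the compactness criterion of Nyikos), so the continuous identity map is a homeomorphism. You instead prove both inclusions directly at the level of subbases and never use compactness. Your direction $\rho\subseteq\sigma$ is essentially a subbase-level version of the paper's explicit direction; in particular your handling of a point $s$ on a successor level of $S$ but a limit level of $T$, via the immediate $S$-predecessor $s^-$ and the element $r_0\in\ims(s^-)\cap\hat{s}$ with $V_s\cap S=V_{r_0}\cap S$, is the same device as the paper's third bullet. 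The direction $\sigma\subseteq\rho$, which the paper does not argue explicitly, is where your claim $(\ast)$ does real work: it rules out $\pi(t)=\min(V_t\cap S)$ lying on a limit level of $S$ when $t$ lies on a successor level of $T$, and this is a correct and clean way to isolate the role of closedness (together with $\wedge$-closedness, which you use both to produce $\pi(t)$ and to get $V_s\cap S=V_{r_0}\cap S$). What each approach buys: the paper's is shorter given that the compactness machinery is already in place for the rest of the article; yours is self-contained, avoids the compact-Hausdorff comparison principle, and shows directly that the two clopen algebras generated by wedges coincide. The only points left implicit are harmless: in $(\ast)$ the supremum $\mu$ cannot be the root since $P\neq\emptyset$, so the successor-level case is the only one to exclude, and the existence of $\min(V_t\cap S)$ follows as you sketch.
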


\begin{proof}
We firstly observe that if $S$ is a branch of $T$, then the two topologies coincide with the interval topology. We shall prove that if $S$ is endowed with the coarse wedge topology, then it is a compact space. We observe that, since $T$ is chain complete, any chain in $S$ has a supremum in $T$. By the closedness of $S$, the supremum belongs to $S$. Moreover, since $S$ is closed under the $\wedge$ operation and $T$ is rooted, we deduce that $S$ is rooted too. Therefore, by \cite[Corollary 3.5]{Nyikos2} $S$, endowed with the coarse wedge topology, is a compact Hausdorff space. We shall prove that the coarse wedge topology on $S$ is coarser than the subspace topology.\\
Let $x\in S$ and suppose that $x$ is on a successor level in $T$. Since $S$ is closed, $x$ is also on a successor level in $S$. Let $W_{x}^{F}\subset S$ be an open basic neighborhood of $x$, where $F=\{t_i\}_{i=1}^{n}\subset \ims_{S}(x)$. For each $t_i\in T$ there exists a unique $u(t_i)\in \ims_{T}(x)$ such that $t_i\geq u(t_i)$. Let $F_1=\{u(t_i)\}_{i=1}^{n}$. Hence we have $W_{x}^{F}\supset W_{x}^{F_1}\cap S$.\\
Let $x\in S$ and suppose that $x$ belongs to a limit level in $T$. Now two cases are possible:
\begin{itemize}
\item $x$ is on a limit level in $S$. Let $W_{s}^{F}\subset S$ be an open basic neighborhood of $x$, where $s<x$ is on a successor level in $S$ and $F=\{t_i\}_{i=1}^{n}\subset \ims_{S}(x)$. Let $F_1=\{u (t_i)\}_{i=1}^{n}$ and $\{s+1\}=\hat{x}\cap \Lev_{\htte(s,T)+1}(T)$. Hence $W_{s+1}^{F_1}$ is an open basic neighborhood of $x$ in $T$ and $W_{s+1}^{F_1}\cap S\subset W_{s}^{F}$.
\item $x$ is on a successor level in $S$. Let $W_{x}^{F}\subset S$ be an open basic neighborhood of $x$, where $F=\{t_i\}_{i=1}^{n}\subset \ims_{S}(x)$. Since $x$ is on a successor level in $S$, $x$ has an unique immediate predecessor, say $x-1$. Since $x$ is on a limit level in $T$ and $S$ is closed in $T$, there exists $s \in T$ on a successor level such that $x-1<s<x$ and $W_{s}^{x}\cap S=\emptyset$. Indeed, let $x-1<s<x$ and suppose that $y\in W_{s}^{x}\cap S$, then we get $x-1<s\leq x\wedge y<x$. Since $x\wedge y\in S$ we find a contradiction because of the maximality of $x-1$. Hence $W_{s}^{F_1}\cap S\subset W_{x}^{F}$, where $F_1=\{u (t_i)\}_{i=1}^{n}$.
\end{itemize}
Since $S$ is a compact Hausdorff space in both topologies, we obtain the assertion.
\end{proof}

As a consequence we obtain the following result.

\begin{cor}\label{Metrizabletree}
Let $C$ be a countable subset of a tree $T$. Then $\overline{C}$ is a metrizable subspace of $T$. 
\end{cor}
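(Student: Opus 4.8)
The plan is to pass from $C$ to its closure under $\wedge$, invoke Lemma~\ref{Subspacetopology}, and then produce a countable base for the resulting closure. First I would let $D$ be the smallest $\wedge$-closed subset of $T$ containing $C$; building it by iterating the binary operation $\wedge$ ($C_0=C$, $C_{n+1}=C_n\cup\{s\wedge t:s,t\in C_n\}$, $D=\bigcup_n C_n$) shows that $D$ is countable. The map $\wedge\colon T\times T\to T$ is continuous: on the subbasic sets this is immediate, since $\{(s,t):s\wedge t\ge r\}=V_r\times V_r$ for every $r$ that is minimal or on a successor level. Hence $\overline D$ is again closed under $\wedge$, because $\wedge(\overline D\times\overline D)=\wedge(\overline{D\times D})\subseteq\overline{\wedge(D\times D)}\subseteq\overline D$. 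By Lemma~\ref{Subspacetopology} the closed, $\wedge$-closed set $\overline D$ carries its coarse wedge topology (from the inherited order) as a subspace of $T$, so I may argue with the tree structure of $\overline D$. It then suffices to prove that $\overline D$ is metrizable, since $\overline C\subseteq\overline D$ inherits metrizability; and as $\overline D$ is compact Hausdorff, it is enough to exhibit a countable base, for which in turn it is enough that $\overline D$ have only countably many nodes that are minimal or on a successor level of $\overline D$ — the defining subbase of the coarse wedge topology of $\overline D$ is then countable, hence so is the base of finite intersections.

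The heart of the matter is therefore the claim that every node $t$ of $\overline D$ lying on a successor level of $\overline D$ already belongs to $D$ (there is only one minimal node). To see this, write $A=D\cap V_t^{\overline D}$; since $D$ is dense in $\overline D$ and $V_t^{\overline D}$ is clopen in $\overline D$ (both $V_t^{\overline D}$ and its complement are subbasic in the coarse wedge topology of $\overline D$), the closure of $A$ in $\overline D$ equals $V_t^{\overline D}$, and in particular $A\neq\emptyset$. Choose $a_0\in A$. If $a_0=t$ we are done, so assume $a_0>t$ and let $t^{+}$ be the least element of $\overline D$ with $t<t^{+}\le a_0$ (it exists because $\{s\in T:s\le a_0\}$ is well ordered, and it is an immediate successor of $t$ in $\overline D$, hence on a successor level of $\overline D$). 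The set $V_{t^{+}}^{\overline D}$ is clopen in $\overline D$ and does not contain $t$, so $A\not\subseteq V_{t^{+}}^{\overline D}$; pick $a_1\in A$ with $a_1\not\ge t^{+}$. Now $a_0\wedge a_1\in D$ (as $D$ is $\wedge$-closed) lies in $\overline D$ and satisfies $a_0\wedge a_1\ge t$; it cannot exceed $t$, for otherwise, being an element of $\overline D$ that is $>t$ and $\le a_0$, it would be $\ge t^{+}$, forcing $a_1\ge t^{+}$, a contradiction. Hence $t=a_0\wedge a_1\in D$.

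It follows that the nodes of $\overline D$ that are minimal or on a successor level of $\overline D$ form a subset of $D$ together with the root, hence a countable set, so the coarse wedge topology of $\overline D$ has a countable subbase; thus $\overline D$ is second countable and, being compact Hausdorff, metrizable, and $\overline C$ is metrizable as a subspace. The step I expect to be the crux is the claim of the second paragraph: it is exactly there that one must make the density of $D$ in $\overline D$ and its closedness under $\wedge$ cooperate, and the device that lets them interact is the clopenness of the sets $V_s^{\overline D}$ for $s$ on a successor level of $\overline D$ — the characteristic feature of the coarse wedge topology, transported onto the subspace $\overline D$ by Lemma~\ref{Subspacetopology}.
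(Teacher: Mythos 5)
Your proposal is correct, and while its skeleton (pass to the $\wedge$-closure $D=C_{\wedge}$, then invoke Lemma~\ref{Subspacetopology}) coincides with the paper's, the two halves are carried out differently. For the $\wedge$-closedness of $\overline{D}$ the paper argues combinatorially: for incomparable $s,t\in\overline{C_{\wedge}}$ it picks the immediate successors $u(s),u(t)$ of $s\wedge t$ below $s$ and $t$, finds $s_1\in V_{u(s)}\cap C_{\wedge}$, $t_1\in V_{u(t)}\cap C_{\wedge}$, and concludes $s\wedge t=s_1\wedge t_1\in C_{\wedge}$ (so it even lands in $C_{\wedge}$, not just its closure); your observation that $\wedge^{-1}(V_r)=V_r\times V_r$ makes $\wedge$ continuous and yields $\wedge(\overline{D}\times\overline{D})\subseteq\overline{D}$ more cleanly, at the cost of the weaker conclusion — which is all Lemma~\ref{Subspacetopology} needs. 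The real divergence is the final step: the paper notes that every chain of $\overline{C_{\wedge}}$ is countable and cites Nyikos's theorem to conclude that $\overline{C_{\wedge}}$ is a separable Corson compact, hence metrizable; you instead prove directly that every node of $\overline{D}$ lying on a successor level of $\overline{D}$ is of the form $a_0\wedge a_1$ with $a_0,a_1\in D$ (your density argument with $t^{+}$ is sound, and $t^{+}$ is indeed on a successor level of $\overline{D}$ so that $V_{t^{+}}^{\overline{D}}$ is subbasic clopen), so the subbase of the coarse wedge topology of $\overline{D}$ is countable and $\overline{D}$ is a second countable compact Hausdorff space. Your route is self-contained and elementary, avoiding both Corson compactness and the external reference; the paper's is shorter given that machinery. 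Note also that your key claim is essentially the quantitative content hiding in the paper's stronger version of $\wedge$-closedness ($s\wedge t\in C_{\wedge}$ rather than $\in\overline{C_{\wedge}}$), which your continuity argument discards and your second paragraph then recovers.
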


\begin{proof}
Let $C$ be a countable subset of $T$. Let $C_{\wedge}$ be the smallest subset of $T$ containing $C$ and closed under the $\wedge$ operation. It is clear that $C_{\wedge}$ is a countable subset. We shall prove that $\overline{C_{\wedge}}$ is closed under $\wedge$. Let $s,t\in \overline{C_{\wedge}}$ and consider $s\wedge t$. Suppose that $s,t$ are incomparable elements, otherwise the assertion would follows immediately. Let $\{u(t)\}=\ims(s\wedge t)\cap \hat{t}$ and $\{u(s)\}=\ims(s\wedge t)\cap \hat{s}$. Since $s,t\in \overline{C_{\wedge}}$, there are $s_1, t_1\in C_{\wedge}$ such that $s_1\in V_{u(s)}$ and $t_1\in V_{u(t)}$. Therefore we have $s\wedge t=s_1\wedge t_1\in C_{\wedge}$.\\
We observe that if $t\in \overline{C_{\wedge}}\setminus C_{\wedge}$, then $t$ belongs to a limit level. Indeed, suppose that $t\in \overline{C_{\wedge}}\setminus C_{\wedge}$ and $t$ belongs to a successor level. Then there exists an infinite set $A$, such that for each $\alpha\in A$ there exists $t_{\alpha}\in \ims(t)$ satisfying $V_{t_{\alpha}}\cap C_{\wedge}\neq \emptyset$ . Pick $s_{\alpha_i}\in V_{t_{\alpha_i}}\cap C_{\wedge}$ such that $\alpha_i\in A$ with $i=1,2$, then we have $s_{\alpha_1}\wedge s_{\alpha_2}=t\in C_{\wedge}$, a contradiction. Therefore, any $t\in \overline{C_{\wedge}}\setminus C_{\wedge}$ belongs to a limit level and so any chain is at most countable. Combining Lemma \ref{Subspacetopology} with \cite[Theorem 2.8]{Nyikos1} we obtain that $\overline{C_{\wedge}}$ is a separable Corson compact space, hence metrizable. Since $\overline{C}\subset \overline{C_{\wedge}}$, we obtain the assertion.
\end{proof}

Let $X$ be a topological space, a family $\mathcal{U}$ of subsets of $X$ is $T_0$-separating in $X$ if for every two distinct elements $x,y\in X$ there is $U\in\mathcal{U}$ satisfying $|\{x,y\}\cap U|=1$. A family $\mathcal{U}$ is point countable on $D\subset X$ if

\begin{equation*}
|\{U\in\mathcal{U}:\,x\in U\}|\leq \omega_0
\end{equation*}

for every $x\in D$. Since we are interested in compact trees, we are going to state \cite[Proposition 1.9]{Kalenda2} in these terms.

\begin{thm},\label{T0SeparatingFamily}
Let $T$ be a tree and $D=\{t\in T:\cf(t)\leq \omega_0\}$. Then the following are equivalent:
\begin{enumerate}[$(i)$]
\item $T$ is Valdivia.
\item $D$ is a $\Sigma$-subset of $T$.
\item there is a $T_0$-separating family of basic clopen sets point-countable exactly on $D$.
\end{enumerate}
\end{thm}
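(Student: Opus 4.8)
\emph{Approach.} The statement is \cite[Proposition~1.9]{Kalenda2} read in the language of trees, and I would prove it by going through condition $(ii)$, which is the ``coordinate'' description mediating between the topological condition $(i)$ and the combinatorial one $(iii)$. Thus the plan is: establish the dictionary $(ii)\Leftrightarrow(iii)$; deduce $(ii)\Rightarrow(i)$ from density of $D$; and prove $(i)\Rightarrow(ii)$ by showing that in a Valdivia tree the canonical set $D$ is \emph{the} countably closed dense $\Sigma$-subset.

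\emph{The equivalence $(ii)\Leftrightarrow(iii)$ and $(ii)\Rightarrow(i)$.} Given $(ii)$, a homeomorphic injection $h\colon T\to\R^{\Gamma}$ with $h(D)=h(T)\cap\Sigma(\Gamma)$ produces the $T_0$-separating family of cozero sets $h^{-1}(\{x:x(\gamma)\neq 0\})$, $\gamma\in\Gamma$, which is point-countable exactly on $D$. To upgrade these to basic clopen sets I would use that the coarse wedge topology is zero-dimensional — the $V_t$ with $t$ minimal or on a successor level, together with their complements, form a clopen subbase — and that $T$ is compact: every cozero set is a countable union of clopen sets, every clopen set is a finite union of basic clopen sets $W_t^F$, and passing to all these pieces preserves $T_0$-separation and point-countability exactly on $D$ (each piece lies inside an original set, and only countably-times-finitely many pieces arise inside each). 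Conversely, given $(iii)$, the evaluation $t\mapsto(\chi_{U}(t))_{U\in\mathcal U}$ is continuous (clopenness), injective ($T_0$-separation), hence a topological embedding of $T$ into $\{0,1\}^{\mathcal U}\subset\R^{\mathcal U}$ carrying a point into $\Sigma(\mathcal U)$ precisely on $D$; so $D$ is a $\Sigma$-subset. Finally $(ii)\Rightarrow(i)$ is immediate once one observes that $D$ is dense: every nonempty basic open set $W_t^F$ contains $t$ (if $t$ is minimal or on a successor level) or its successor-level base point (in the limit case), and all such points lie in $D$.

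\emph{The implication $(i)\Rightarrow(ii)$.} From a dense $\Sigma$-subset of $T$ and the refinement above one gets a $T_0$-separating family $\mathcal V$ of basic clopen sets point-countable exactly on a dense set $A$; enlarging $A$ to the full set of points of point-countability of $\mathcal V$, we may assume $A$ is a (countably closed) $\Sigma$-subset. I would then prove $A=D$. For $D\subseteq A$: if $t$ is on a successor level, build points $a_n\in A\cap W_t^{F_n}$, using density of $A$, with $F_n\subseteq\ims_T(t)$ increasing and the $a_n$ passing through pairwise distinct immediate successors of $t$; a short check shows $t\in\overline{\{a_n:n\}}$, even if $\ims_T(t)$ is uncountable, whence $t\in A$ by countable closedness — and for $t$ on a limit level with $\cf(t)\le\omega_0$ one applies this to a cofinal sequence in $\hat t$ of successor-level points. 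For $A\subseteq D$: suppose $p\in A$ has uncountable cofinality; then $\hat p$ is closed and $\wedge$-closed in $T$, so by Lemma~\ref{Subspacetopology} it carries the coarse wedge topology of the chain $\hat p$, i.e.\ $\hat p$ is the ordinal interval $[0,\htte(p,T)]$ with its order topology, and $A\cap\hat p$ is a $\Sigma$-subset of $\hat p$ containing $p=\htte(p,T)$ together with (by the already-proved inclusion $D\subseteq A$) every successor ordinal. But a $\Sigma$-subset is countably tight (being homeomorphic to a subspace of a $\Sigma$-product of lines), while the top point of an ordinal interval of uncountable cofinality lies in the closure of the set of successor ordinals and in the closure of no countable subset of it — a contradiction (equivalently: the top point of such an interval belongs to no $\Sigma$-subset). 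Hence $A=D$ and $D$ is a $\Sigma$-subset.

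\emph{Main obstacle.} The crux is the inclusion $A\subseteq D$: excluding points of uncountable cofinality from the $\Sigma$-subset. This is precisely where the tree order meets the topology, via closed branches, reducing the question to the classical behaviour of ordinal intervals. The rest is bookkeeping: that the refinement in $(ii)\Leftrightarrow(iii)$ respects $T_0$-separation and point-countability, and that the sequence $(a_n)$ in $(i)\Rightarrow(ii)$ really accumulates at $t$ when $t$ has uncountably many immediate successors.
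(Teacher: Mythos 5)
Your proposal is correct, but note that for this theorem the paper offers no argument of its own: it disposes of $(i)\Leftrightarrow(ii)$ by citing \cite[Proposition 3.2]{SomaTree} and of $(ii)\Leftrightarrow(iii)$ as a particular case of \cite[Proposition 1.9]{Kalenda2}. What you have written is therefore a self-contained reconstruction of the content of those two references rather than a different route through an in-paper proof; your decomposition (condition $(ii)$ as the pivot) matches the paper's exactly. The substantive steps are sound: deducing $D\subseteq A$ for any dense $\Sigma$-subset $A$ from countable closedness and the diagonal choice of the $a_n$ through distinct immediate successors, and deducing $A\subseteq D$ by restricting to $\hat p$, invoking Lemma \ref{Subspacetopology} to identify $\hat p$ with an ordinal interval, and using countable tightness of subspaces of $\Sigma(\Gamma)$ to exclude the top point when $\cf(p)\geq\omega_1$ --- this is essentially the argument behind the cited Proposition 3.2. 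The one place your sketch is thinner than it should be is the ``exactly on $D$'' clause when you refine the cozero sets $h^{-1}(\{x:x(\gamma)\neq 0\})$ to basic clopen pieces: for $t\notin D$ you must produce uncountably many \emph{distinct} members of the refined family containing $t$, and a priori uncountably many coordinates $\gamma$ could contribute one and the same clopen piece $C\ni t$. This degenerate case is excluded because such a $C$ would be a nonempty open set contained in uncountably many of the cozero sets, hence disjoint from $D$, contradicting the density of $D$; so the refinement genuinely uses density, which your bookkeeping remark (``only countably-times-finitely many pieces arise inside each'') does not capture. With that point added, the proof is complete.
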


The equivalence between $(i)$ and $(ii)$ follows from \cite[Proposition 3.2]{SomaTree}, while the equivalence between $(ii)$ and $(iii)$ follows by \cite[Theorem 19.11]{KKLP}, observing that a tree $T$ endowed with the coarse wedge topology is a zero-dimensional space.\\
We conclude this section providing a description of the Radon measures on trees, these results are useful to investigate the spaces of continuous functions on trees. In order to do that we observe that a chain complete and rooted tree $T$ endowed with the coarse wedge topology is the Stone space of the Boolean algebra of clopen subsets of $T$, denoted by $\Clop(T)$. We observe that, for a tree $T$ endowed with the coarse wedge topology, $\Clop(T)$ is generated by the family $\{V_t: t \mbox{ is on a successor level}\}$.\\
Combining the previous observation with \cite[Lemma 3.2]{DzaPle}, we are able to prove that every Radon measure on a tree $T$ has metrizable support. We recall that a partial order is $\sigma$-centred if it is a countable union of centered subsets. 

\begin{lem}\cite[Lemma 3.2]{DzaPle}
Suppose that a Boolean algebra $\U$ is generated by a subfamily $\G$ such that if $a,b\in \G$, then $a\leq b$, $b\leq a$ or $a\cdot b=0$, and that $\U$ is not $\sigma$-centred. Then $\U$ carries no strictly positive measure.
\end{lem}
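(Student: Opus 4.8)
The plan is to prove the contrapositive: assuming that $\U$ carries a strictly positive (finitely additive) measure $\mu$, normalised so that $\mu(1)=1$, I will show that $\U$ is $\sigma$-centred. The first, and perhaps most conceptual, step is a Dilworth-type reduction of $\G$ to countably many chains. For $n\geq 1$ put $\G_n=\{a\in\G:\mu(a)\geq 1/n\}$. By the hypothesis on $\G$, any two incomparable members of $\G$ are disjoint; hence an antichain of $\G_n$ is a family of pairwise disjoint sets of measure at least $1/n$ and so has at most $n$ elements. By Dilworth's theorem, in its finite-width form (valid for arbitrary posets), $\G_n$ is the union of at most $n$ chains. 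Since $\mu$ is strictly positive, $\G\setminus\{0\}=\bigcup_{n\geq 1}\G_n$, so $\G\setminus\{0\}$ is a countable union of chains, say $\G\setminus\{0\}=\bigcup_{i\in\omega}C_i$.

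Next I localise inside each chain. On $C_i$ the map $c\mapsto\mu(c)$ is strictly increasing into $[0,1]$, because $c<c'$ in $C_i$ forces $c'\wedge\neg c\neq 0$ and hence $\mu(c')-\mu(c)>0$. Thus $C_i$ is order-isomorphic to a subset of $[0,1]$; in particular it is a separable linear order, and it has at most countably many jumps (pairs of immediate neighbours), since distinct jumps correspond to disjoint open intervals of $[0,1]$. Fix a countable $D_i\subseteq C_i$ such that $\mu[D_i]$ is dense in $\mu[C_i]$, $D_i$ contains both endpoints of every jump of $C_i$, and $D_i$ contains $\min C_i$ and $\max C_i$ whenever these exist. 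Put $\G^{*}=\bigcup_{i\in\omega}D_i$ and $\U^{*}=\langle\G^{*}\rangle$. Then $\G^{*}$ is countable, so $\U^{*}$ is a countable Boolean algebra; thus $\U^{*}\setminus\{0\}$ is a countable union of singletons, each a centred set, and $\U^{*}$ is trivially $\sigma$-centred.

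It remains to show that $\U^{*}$ is \emph{dense} in $\U$, meaning that every nonzero $u\in\U$ dominates some nonzero element of $\U^{*}$. This suffices: for $v\in\U^{*}\setminus\{0\}$ the set $\{u\in\U:u\geq v\}$ is centred (finite meets lie above $v\neq 0$), and by density these countably many centred sets cover $\U\setminus\{0\}$, so $\U$ is $\sigma$-centred, contradicting the hypothesis. To prove density, fix $u\neq 0$, write it in disjunctive normal form with respect to the finitely many generators occurring in some expression for it, and pick a nonzero disjunct $w\leq u$. Grouping the generators of $w$ by the chain containing them and using that each $C_i$ is totally ordered, one checks routinely that the contribution of $C_i$ to $w$ is a single \emph{interval} $b_i\setminus a_i$ (with $a_i<b_i$ in $C_i$, allowing $a_i$ to be a formal bottom and $b_i$ a formal top), so $w=\bigwedge_{s=1}^{m}(b_{i_s}\setminus a_{i_s})$ for finitely many chains $C_{i_1},\dots,C_{i_m}$. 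For each $s$ I replace $b_{i_s}\setminus a_{i_s}$ by a subinterval $b'_{i_s}\setminus a'_{i_s}$ with $a'_{i_s},b'_{i_s}\in D_{i_s}$ (so $b'_{i_s}\setminus a'_{i_s}\in\U^{*}$) and $\mu\big((b_{i_s}\setminus a_{i_s})\setminus(b'_{i_s}\setminus a'_{i_s})\big)<\mu(w)/(2m)$: when an endpoint has no immediate neighbour on the relevant side its $\mu$-value is approximable from $D_{i_s}$, when it does the corresponding jump endpoints lie in $D_{i_s}$ and no mass need be lost there, and strict positivity lets us keep each subinterval nonempty. Then $v:=\bigwedge_{s}(b'_{i_s}\setminus a'_{i_s})\in\U^{*}$ satisfies $v\leq w\leq u$, and since $w\setminus v\leq\bigvee_{s}\big((b_{i_s}\setminus a_{i_s})\setminus(b'_{i_s}\setminus a'_{i_s})\big)$, sub-additivity gives $\mu(v)\geq\mu(w)-m\cdot\mu(w)/(2m)=\mu(w)/2>0$, so $v\neq 0$.

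The two substantive points are the Dilworth reduction to countably many chains (short but essential) and, above all, the density step. I expect the latter to be the main obstacle in a full write-up: one must carefully reduce an arbitrary nonzero element to an intersection of single chain-intervals, and then approximate each such interval by one with endpoints in $D_i$ while simultaneously controlling the $\mu$-mass lost, keeping the subintervals nonempty, and correctly handling the jump/endpoint cases; the jump case analysis is the fiddliest piece.
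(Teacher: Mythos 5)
The paper offers no proof of this lemma at all: it is imported verbatim from \cite{DzaPle}, so there is no internal argument to compare yours against, and your write-up has to stand on its own. Your overall strategy --- pass to the contrapositive, use the measure to bound antichains in $\G_n=\{a\in\G:\mu(a)\geq 1/n\}$ and apply the finite-width Dilworth theorem to split $\G\setminus\{0\}$ into countably many chains, extract a countable subfamily $\G^{*}$, and show that the countable algebra it generates is dense in $\U$ (whence $\U$ is $\sigma$-centred) --- is sound and is essentially the standard argument for this fact. The reduction of a nonzero disjunct $w$ to a finite meet of chain-intervals $b_{i}\setminus a_{i}$ and the $\mu(w)/(2m)$ bookkeeping are both correct.

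There is, however, one genuine flaw, located exactly where you yourself predict trouble: the specification of $D_i$ and the claim that ``when an endpoint has no immediate neighbour on the relevant side its $\mu$-value is approximable from $D_{i}$''. This is false. Having no immediate predecessor in $C_i$ does not imply $\mu(b)=\sup\{\mu(c):c\in C_i,\ c<b\}$: the set $\{c\in C_i:c<b\}$ may be nonempty, have no maximum, and still have $\sup_{c<b}\mu(c)$ strictly below $\mu(b)$. Picture $\mu[C_i]=\{1/2-1/n:n\geq 3\}\cup\{3/4\}\cup\{3/4+1/m:m\geq 5\}$ and let $b$ be the element of value $3/4$. Then $b$ is not a jump endpoint (its predecessors have no maximum), so your $D_i$ need not contain it; and since $3/4$ is a limit of $\mu[C_i]$ from above, a topologically dense $\mu[D_i]$ need not contain it either. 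Every $b'\in D_i$ with $b'\leq b$ then satisfies $\mu(b)-\mu(b')>1/4$, which can exceed $\mu(w)/(2m)$, and the approximation step breaks. The correct dichotomy is not order-theoretic jumps but gaps in the value set: you must put into $D_i$ every $b\in C_i$ whose value $\mu(b)$ is isolated from below in $\mu[C_i]$, and every $a\in C_i$ whose value is isolated from above; each of these families is countable by the same disjoint-open-intervals argument you use for jumps, so $D_i$ stays countable and the rest of the proof goes through verbatim. A second, purely cosmetic point: since $\G_n\subseteq\G_{n+1}$, a generator may lie in several of the chains $C_i$, so fix an assignment of each generator to a single chain before ``grouping the generators of $w$ by the chain containing them''.
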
 

\begin{prop}\label{Radonseparablesupport}
Let $T$ be a tree and $\mu$ be a Radon measure on $T$. Then the support of $\mu$ is metrizable.
\end{prop}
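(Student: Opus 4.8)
The plan is to work with $S:=\supp(\mu)$ and show that $\Clop(S)$ is forced to be $\sigma$-centred by the structural Lemma of \cite{DzaPle} quoted above, and then to read off metrizability of $S$ from separability plus Corollary~\ref{Metrizabletree}. First I would reduce to the case $\mu\ge 0$, since $\supp(\mu)=\supp(|\mu|)$. As $T$ is compact, $\mu$ is finite and, by inner regularity, concentrated on $S$ (i.e. $\mu(T\setminus S)=0$), so every nonempty relatively open subset of $S$ has positive $\mu$-measure. Being closed in the zero-dimensional compact space $T$, the set $S$ is itself compact and zero-dimensional, hence the Stone space of $\Clop(S)$, and the restriction of $\mu$ to $\Clop(S)$ is a (countably additive) \emph{strictly positive} measure on that Boolean algebra.

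Next I would verify that $\Clop(S)$ admits generators with the tree-like incomparability of the Lemma of \cite{DzaPle}. The restriction map $\rho\colon\Clop(T)\to\Clop(S)$, $\rho(A)=A\cap S$, is a Boolean homomorphism, and it is onto: given a clopen $B\subset S$, the sets $B$ and $S\setminus B$ are disjoint compacta in the zero-dimensional compactum $T$, hence there is a clopen $A\subset T$ with $B\subset A$ and $A\cap(S\setminus B)=\emptyset$, so $\rho(A)=B$. Since $\Clop(T)$ is generated by $\{V_t:\cf(t)<\omega_0\}$, it follows that $\Clop(S)$ is generated by $\mathcal{G}=\{V_t\cap S:\cf(t)<\omega_0\}$; and for $s,t$ with $\cf(s),\cf(t)<\omega_0$, according as $s\le t$, $t\le s$, or $s$ and $t$ are incomparable, one has $V_t\cap S\subset V_s\cap S$, or $V_s\cap S\subset V_t\cap S$, or $(V_s\cap S)\cap(V_t\cap S)=\emptyset$. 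Thus $\mathcal{G}$ satisfies exactly the hypothesis of the Lemma of \cite{DzaPle}, and since $\Clop(S)$ does carry a strictly positive measure, that Lemma (in contrapositive form) forces $\Clop(S)$ to be $\sigma$-centred.

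Finally I would turn $\sigma$-centredness into separability and then into metrizability. Write $\Clop(S)=\bigcup_{n\in\omega}\mathcal{C}_n$ with each $\mathcal{C}_n$ centred; each $\mathcal{C}_n$ has the finite intersection property, hence extends to an ultrafilter $x_n$ of $\Clop(S)$, i.e. a point $x_n\in S$. If $U\subset S$ is nonempty and open, choosing a nonempty clopen $B\subset U$ we get $B\in\mathcal{C}_n$ for some $n$, so $x_n\in B\subset U$; hence $D=\{x_n:n\in\omega\}$ is a countable dense subset of $S$, so $S=\overline{D}$. By Corollary~\ref{Metrizabletree} applied to $D\subset T$, the closure $\overline{D}$ is metrizable, and therefore so is $\supp(\mu)=S$.

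The main obstacle to watch is that $S$, although closed in $T$, need not be closed under $\wedge$, so Lemma~\ref{Subspacetopology} does not apply to it and one cannot treat $S$ as a tree with the coarse wedge topology; the argument must instead proceed abstractly through $\Clop(S)$ — via surjectivity of $\rho$, the tree-like incomparability of the chosen generators, and the D\v{z}amonja--Plebanek dichotomy — and only return to trees at the very end through the countable dense set and Corollary~\ref{Metrizabletree}. The one additional point requiring care is the standard measure-theoretic fact $\mu(T\setminus\supp\mu)=0$, which is what guarantees that the restricted measure is strictly positive on $\Clop(S)$.
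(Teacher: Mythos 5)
Your proof is correct, and its core --- generating $\Clop(S)$ by the traces $V_t\cap S$ (for $t$ of finite cofinality), checking the tree-like trichotomy, and invoking the D\v{z}amonja--Plebanek lemma against the strictly positive restricted measure to obtain $\sigma$-centredness --- is exactly the paper's argument; your explicit verification that the restriction homomorphism $\Clop(T)\to\Clop(S)$ is onto is a point the paper leaves implicit. Where you genuinely diverge is the endgame. The paper stays inside the Boolean algebra: since any centred subfamily of $\mathcal{G}$ is a chain (two generators are comparable or disjoint), $\mathcal{G}$ is a countable union of chains, and an uncountable chain $\{V_{t_\alpha}\cap S\}_{\alpha<\omega_1}$ would yield uncountably many pairwise disjoint nonempty relatively open subsets of $S$, each of positive measure --- impossible for a finite measure; hence $\mathcal{G}$, and with it $\Clop(S)$, is countable, so $S$ is second countable and metrizable outright. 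You instead pass from $\sigma$-centredness of the whole algebra to a countable dense set of ultrafilter points and then quote Corollary~\ref{Metrizabletree}. Both routes are valid. The paper's needs one extra (easy) measure-theoretic step but is self-contained at that stage and gives the slightly stronger conclusion that $\Clop(S)$ itself is countable; yours avoids that second measure argument at the cost of leaning on Corollary~\ref{Metrizabletree} (hence on Lemma~\ref{Subspacetopology} and the Corson-compactness result of Nyikos), and you are right that this is the correct way to re-enter the tree setting, since $S$ need not be closed under $\wedge$.
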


\begin{proof}
Let $S=\supp(\mu)$. Hence $S$ is a compact subspace of $T$, therefore it is a Stone space. We observe that the Boolean algebra $\Clop(S)$ is generated by the family $\G=\{V_t\cap S:  t \mbox{ is on a successor level}\}$. Hence by the previous lemma we have that $\Clop(S)$ and in particular $\G$ are $\sigma$-centred.\\
Hence $\G$ may be written as a union of countably many chains. We claim that all these chains are countable. If not let $\{V_{t_{\alpha}}\cap S:\alpha<\omega_1\}$ be a chain of size $\omega_1$,  such that $t_{\alpha}\leq t_{\beta}$ if $\alpha\leq \beta$. Let $U_{\alpha}=(V_{t_{\alpha+1}}\setminus V_{t_{\alpha}})\cap S$ for each $\alpha <\omega_1$. Then $\{U_{\alpha}\}_{\alpha<\omega_1}$ is an uncountable family of disjoint open subsets of $S$ with positive measure. That is a contradiction. Hence $\G$ is countable as well as $\Clop(S)$. Therefore $S$ is metrizable.
\end{proof}

As immediate consequence of the previous result we obtain, by \cite[Theorem 5.3]{Kalenda2}, $C(T)$ is a WLD Banach space if and only if $T$ is a Corson compact space. Moreover, from the previous proposition, we easily obtain the following result.

\begin{cor}\label{supportomisure}
Let $T$ be a tree with height equal to $\eta+1$, where $\cf(\eta)\geq \omega_1$, and $\mu$ be a continuous Radon measure on $T$. Then there exists $\beta<\eta$ such that $\supp(\mu)\subset T_{\beta}$.
\end{cor}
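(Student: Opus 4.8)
The plan is to combine Proposition~\ref{Radonseparablesupport} with the hypothesis $\cf(\eta)\geq\omega_1$. Set $S=\supp(\mu)$. Since $\supp(\mu)=\supp(|\mu|)$ and $|\mu|$ is again a continuous Radon measure (continuity of the singletons is unchanged), I may and do assume $\mu\geq 0$. By Proposition~\ref{Radonseparablesupport}, $S$ is a compact metrizable space, hence separable and first countable; moreover $\mu(T\setminus S)=0$, because any compact subset of the open set $T\setminus S$ is covered by finitely many open $\mu$-null sets, and $\mu$ is inner regular.

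The first step is to show that $S$ contains no point of the top level $\Lev_{\eta}(T)$. Observe that $\cf(\eta)\geq\omega_1$ forces $\eta$ to be a limit ordinal with $\eta\geq\omega_1$, and that since $\htte(T)=\eta+1$ a point $t\in\Lev_{\eta}(T)$ is maximal, so $\ims(t)=\emptyset$; hence by the description of the coarse wedge topology a neighborhood base at $t$ in $T$ is given by the clopen sets $V_s$ with $s<t$ on a successor level. Suppose $t\in S\cap\Lev_{\eta}(T)$. Starting from any countable neighborhood base at $t$ in $S$ (available by first countability) and refining each of its members by a set of the form $V_s\cap S$, I get a countable neighborhood base at $t$ in $S$ of the form $\{V_{s_n}\cap S\}_{n}$ with $s_n<t$ on successor levels; since $S$ is Hausdorff this yields $\bigcap_n (V_{s_n}\cap S)=\{t\}$. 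On the other hand $\{s_n\}$ is a countable chain in $\hat{t}$, so by $\cf(\eta)\geq\omega_1$ its supremum $s^{*}$ satisfies $\htte(s^{*},T)<\eta$, hence $s^{*}<t$ and $\bigcap_n V_{s_n}=V_{s^{*}}$. Choosing $s^{**}\in\hat{t}$ on the successor level $\htte(s^{*},T)+1<\eta$ gives a clopen neighborhood $V_{s^{**}}\subseteq V_{s^{*}}$ of $t$ with $V_{s^{**}}\cap S=\{t\}$. Then $\mu(V_{s^{**}})=\mu(V_{s^{**}}\cap S)=\mu(\{t\})=0$ because $\mu$ is continuous, contradicting $t\in\supp(\mu)$. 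Therefore $S\subseteq T_{<\eta}$.

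To conclude, let $D$ be a countable dense subset of $S$, which exists since $S$ is separable. Because $S\subseteq T_{<\eta}$, the set $\{\htte(d,T):d\in D\}$ is a countable subset of $\eta$, and since $\cf(\eta)\geq\omega_1$ its supremum $\beta$ satisfies $\beta<\eta$; thus $D\subseteq T_{\beta}$. Finally $T_{\beta}$ is closed in $T$, since its complement is $\bigcup\{V_s:s\in\Lev_{\beta+1}(T)\}$, a union of clopen sets. Hence $S=\overline{D}\subseteq T_{\beta}$, which is the assertion.

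I expect the main obstacle to be the first step, ruling out points on the top level: this is the only place where the cofinality hypothesis $\cf(\eta)\geq\omega_1$ and the peculiar form of the coarse wedge neighborhoods at a maximal limit point genuinely interact, and one must be careful that the countable neighborhood base produced by metrizability can be taken to consist of sets $V_{s_n}\cap S$ and that its intersection is $\{t\}$. Once $S$ is known to avoid $\Lev_{\eta}(T)$, the remainder is a soft argument using separability of $S$ together with the closedness of the levels $T_{\beta}$; the auxiliary facts that $\mu(T\setminus S)=0$ and that one may pass to $|\mu|$ are routine for Radon measures.
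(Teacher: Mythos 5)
Your proof is correct and takes the route the paper intends: the paper offers no explicit proof, stating only that the corollary ``follows trivially'' from Proposition~\ref{Radonseparablesupport}, and your argument is exactly the natural filling-in of that deduction (metrizability, hence separability and first countability, of $\supp(\mu)$; continuity of $\mu$ combined with the form of coarse wedge neighborhoods at a maximal point of a level of uncountable cofinality to exclude $\Lev_{\eta}(T)$ from the support; and $\cf(\eta)\geq\omega_1$ to bound the heights of a countable dense set). All the auxiliary steps you flag (passing to $|\mu|$, $\mu(T\setminus S)=0$, closedness of $T_{\beta}$) are handled correctly.
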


\section{Characterization of Valdivia compact trees}

The purpose of this section is to describe the relations between trees and the class of Valdivia compacta. We will characterize trees of height less than $\omega_2$. We recall the definition of $\omega_1$-relatively discrete subset.

\begin{defn}
Let $X$ be a topological space. We say that a subset $A\subset X$ is \textit{$\omega_1$-relatively discrete} if it can be written as union of $\omega_1$-many relatively discrete subsets of $X$.
\end{defn}

The main results of this section are contained in the following theorem.

\begin{thm}\label{caratValdiviaGeneral}
Let $T$ be a tree. Let $R=\{t\in T:\cf(t)=\omega_1\, \&\, \ims(t)\neq \emptyset\}$. Consider the following conditions:
\begin{enumerate}[$(i)$]
\item $|\ims(t)|<\omega_0$ for every $t\in R$;
\item $R\cap \Lev_{\alpha}(T)$ is $\omega_1$-relatively discrete for each $\alpha<\omega_2$ with $\cf(\alpha)=\omega_1$.
\end{enumerate}
Then the following two statements hold.
\begin{enumerate}[$(1)$]
\item If $T$ is Valdivia, then $\htte(T)\leq \omega_2$ and $(i),(ii)$ hold.
\item If $\htte(T)<\omega_2$ and $(i),(ii)$ hold, then $T$ is Valdivia.
\end{enumerate}
\end{thm}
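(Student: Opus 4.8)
Everything is organised around Theorem~\ref{T0SeparatingFamily}: a tree $S$ (rooted, chain complete) is Valdivia iff $D_S=\{t\in S:\cf(t)\le\omega_0\}$ is a $\Sigma$-subset of $S$, equivalently iff there is a $T_0$-separating family of basic clopen subsets of $S$ which is point-countable exactly on $D_S$. The bound $\htte(T)\le\omega_2$ in~$(1)$ is already known, \cite{SomaTree}, so what remains is: in~$(1)$, to derive $(i)$ and $(ii)$ from Valdiviness; in~$(2)$, to produce, under $(i)$, $(ii)$ and $\htte(T)<\omega_2$, such a family on $T$. A useful reduction in both cases is to the situation $\htte(T)=\beta+2$ with $\cf(\beta)=\omega_1$ — so the level-$\beta+1$ points are isolated, the level-$\beta$ points have cofinality $\omega_1$, and $R$ is the set of level-$\beta$ points carrying immediate successors. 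On the Valdivia side, for a fixed $\alpha$ with $\cf(\alpha)=\omega_1$ one replaces $T$ by the retract $T_{\alpha+1}$ (the map fixing $T_{\alpha+1}$ and sending a point of larger height to its predecessor on level $\alpha+1$ is a continuous retraction; a retract of a Valdivia compact is Valdivia \cite{Kalenda2}; and by Lemma~\ref{Subspacetopology} the subspace and coarse wedge topologies of $T_{\alpha+1}$ coincide, while relative discreteness passes from closed subspaces to $T$). On the constructive side one argues by induction on $\htte(T)<\omega_2$, the limit stages and the successor-of-uncountable-cofinality stages being absorbed by a permanence property of Valdivia compacta under well-ordered continuous inverse limits of length $\le\omega_1$ with retractional bonding maps, which leaves exactly the step of adjoining a top successor level.

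\textbf{Part~$(1)$.} For~$(i)$: since $\Sigma(\Gamma)$ is countably closed in $\R^{\Gamma}$, every $\Sigma$-subset is countably closed, hence so is $D$; if some $t\in R$ had $|\ims(t)|\ge\omega_0$ then a countably infinite $C\subseteq\ims(t)\subseteq D$ would satisfy $t\in\overline{C}$, because every basic neighbourhood $W_s^F$ of $t$ omits only finitely many points of $\ims(t)$; thus $t\in\overline{C}\subseteq D$, contradicting $\cf(t)=\omega_1$. For~$(ii)$, after the reduction above put $A=R\cap\Lev_{\beta}(T)$, so each $\ims(t)$ ($t\in A$) is finite by~$(i)$. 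Take a $T_0$-separating family $\mathcal U$ of basic clopen sets, point-countable exactly on $D$, each written $U=V_{s}\setminus\bigcup_{r\in F(U)}V_r$ with $F(U)$ finite. For $t\in A$ the set $\mathcal U_t:=\bigcup_{v\in\ims(t)}\{U\in\mathcal U:v\in U\}$ is countable (finitely many $v$, each in $D$), and a short computation identifies it with $\{U\in\mathcal U:t\in U,\ \ims(t)\not\subseteq F(U)\}$. Since $\cf(t)=\omega_1$, the ordinals $\htte(x)<\beta$ for $x$ occurring in the defining data of a member of $\mathcal U_t$ are bounded by some $\delta_t<\beta$. One then shows that distinct $t,t'\in A$ are separated by a member of $\mathcal U_t\cup\mathcal U_{t'}$ — given $U\in\mathcal U$ with, say, $t\in U$, either $U\in\mathcal U_t$, or $\ims(t)\subseteq F(U)$ and one passes to the basic clopen set $U\cup\ims(t)$, which still separates $t$ from $t'$ and whose negative part misses $\ims(t)$ — and that this forces $\htte(t\wedge t')<\max(\delta_t,\delta_{t'})$. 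Consequently each of the $\le\omega_1$ pieces $A_\delta=\{t\in A:\delta_t=\delta\}$ is relatively discrete: for $t\in A_\delta$, no other point of $A_\delta$ extends the element of $\hat t$ on a successor level $\ge\delta$, so $W_s^{\ims(t)}$ with $s$ at such a level meets $A_\delta$ only at $t$. Hence $A$, and therefore $R\cap\Lev_\alpha$ in the original tree, is $\omega_1$-relatively discrete.

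\textbf{Part~$(2)$.} In the inductive reduction the only substantive step is: if $S$ is Valdivia with $\htte(S)=\beta+1$, $\cf(\beta)=\omega_1$, and one adjoins, above each level-$\beta$ point chosen to be ``flagged'', a finite nonempty set of new isolated points, in such a way that the flagged points form an $\omega_1$-relatively discrete set, then the resulting tree $T$ is Valdivia. Write the flagged set as $\bigcup_{\xi<\omega_1}A_\xi$ with each $A_\xi$ relatively discrete, and for $t\in A_\xi$ fix a clopen neighbourhood $W_{s_t}^{\ims(t)}$ of $t$ meeting $A_\xi$ only at $t$ (possible by $(i)$); the $s_t$, $t\in A_\xi$, then form an antichain, which is what lets their data be incorporated into negative parts finitely. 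Starting from a $T_0$-separating family on $S$ point-countable exactly on $D_S$, one builds the required family on $T$ by (a) adding all singletons of isolated points; (b) keeping the old sets, each extended to a basic clopen set of $T$ but with the immediate successors of flagged points lying above its defining wedge excised using the witnesses $W_{s_t}^{\ims(t)}$, so that each isolated point falls into only countably many members; and (c) adding, for each flagged $t$, a cofinal $\omega_1$-chain of clopen neighbourhoods $W_s^{\ims(t)}$ — all containing $t$ but, $\ims(t)$ being finite, none of the points strictly above $t$, and with countable trace below any fixed point — so as to keep the family non-point-countable exactly on the flagged points. A separation check (incomparable $x,y$ being split by a wedge below $x$ when $x\wedge y$ has countable cofinality and by a set of type (b) or (c) when $x\wedge y$ is flagged) and a point-counting check complete it.

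\textbf{Where the work is.} The combinatorial core, which I expect to be the main obstacle in both directions, is the bookkeeping that makes a single family of basic clopen sets simultaneously $T_0$-separating, point-countable at every point of countable cofinality, and non-point-countable at every point of cofinality $\omega_1$. The delicate points are the points of countable cofinality sitting at heights $\ge\omega_1$: their branches pass through possibly uncountably many levels of cofinality $\omega_1$, and must be ``thinned'' coherently along the tree (this is exactly where $\htte(T)<\omega_2$ is used), while simultaneously the enlargements of the family used in Part~$(1)$ and the excision/addition steps in Part~$(2)$ must not destroy point-countability at the flags. Conditions $(i)$ (finiteness of immediate-successor sets on levels of cofinality $\omega_1$) and $(ii)$ (the $\omega_1$-relatively discrete decomposition, whose witnesses have defining wedges forming antichains) are precisely the two ingredients that make this bookkeeping possible.
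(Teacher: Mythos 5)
Your argument for $(i)$ in Part $(1)$ is correct (and arguably cleaner than citing the retractional-skeleton result, since it only uses countable closedness of the $\Sigma$-subset $D$), but both combinatorial cores of your proposal have genuine gaps. In Part $(1)(ii)$ the key claim that separation forces $\htte(t\wedge t')<\max(\delta_t,\delta_{t'})$ is false. Since $\ims(t)\subset D$ while $t\notin D$, all but countably many of the uncountably many members of $\mathcal{U}$ containing $t$ must satisfy $\ims(t)\subset F(U)$ (otherwise some $v\in\ims(t)$ would lie in uncountably many members), and any such set with minimum above $t\wedge t'$ separates $t$ from $t'$ while contributing nothing to $\delta_t$ or $\delta_{t'}$. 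Concretely: take two branches of length $\omega_1$ agreeing up to level $\gamma_0$, topped by $t,t'$ with single successors $v,v'$, and let $\mathcal{U}$ consist of $\{v\}$, $\{v'\}$, the sets $W_{s}^{\{v,v'\}}$ for $s\leq t\wedge t'$, and the sets $W_{s}^{\{v\}}$, $W_{s}^{\{v'\}}$ for $s$ above the split; this family is $T_0$-separating and point-countable exactly on $D$, yet $\delta_t=\delta_{t'}=0$ while $\htte(t\wedge t')=\gamma_0$ is arbitrary. Your repair via $U\cup\ims(t)$ does not help: that set is not a member of $\mathcal{U}$, so its data is not controlled by $\delta_t$. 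Your partition into the pieces $A_\delta$ can still be made to work, but by the paper's mechanism rather than a meet bound: since $t\notin D$ the minima of members containing $t$ are unbounded in $\hat{t}$; choosing $W_{s_0}^{F}\ni t$ with $\htte(s_0)>\delta$, every other $p\in A_\delta\cap W_{s_0}^{F}$ must have $F\cap\ims(p)\neq\emptyset$ (else $W_{s_0}^{F}$ would meet $\ims(p)$ and force $\delta_p\geq\htte(s_0)>\delta$), so the finiteness of $F$ leaves only finitely many such $p$ and one can shrink to a wedge isolating $t$ in $A_\delta$. (Also, ``a retract of a Valdivia compact is Valdivia'' is not a citable theorem from \cite{Kalenda2}; the reduction to $T_{\alpha+1}$ should instead use that a closed subset meeting the dense $\Sigma$-subset densely is Valdivia, which applies here because $D\cap T_{\alpha+1}$ is dense in $T_{\alpha+1}$.)

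In Part $(2)$ the decisive difficulty is not resolved. In your step (b) you excise from each extended old set $\widetilde{U}$ the immediate successors of all flagged points above its defining wedge; but a single $\widetilde{U}$ may contain infinitely many flagged points, so the required negative part is infinite, and $\widetilde{U}$ minus an infinite set of top-level isolated points is not even open (each flagged point inside it is an accumulation point of the removed set). The antichain property of the witnesses $s_t$ does not bound the number of flagged points above $\min \widetilde{U}$, since an antichain may have infinitely many elements inside one wedge. This is exactly what the paper's auxiliary decomposition $\{S_\xi\}_{\xi<\omega_1}$, the threshold function $\phi$, and the sets $\widetilde{U}_p\setminus\ims(s)$ with $\{s\}=\widetilde{U}_p\cap\bigcup_{\eta<\phi(\min(U_p))}A_\eta$ are built to handle: they guarantee that each included set meets the relevant flagged points in at most a singleton, so one finite excision suffices, while point-countability at the successors of the remaining flagged points is recovered from the indexing. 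Your appeal to a permanence property of Valdivia compacta under continuous inverse limits of length $\leq\omega_1$ with retractional bonding maps is also unsupported: no such general statement is invoked in the paper, the inverse limit of the initial parts only reaches $T_{\alpha}$, and the whole point of the argument is that adjoining the top level forces the families on the initial parts to be chosen compatibly with the flags from the start. As written, both halves of the proof are incomplete precisely where the real work lies.
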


The proof of this theorem is split in two parts. Since the first statement does not require any extra result we prove it here, while we postpone the second part at the end of the section because two lemmata are needed. 

\begin{proof}[Proof of Theorem \ref{caratValdiviaGeneral}, $(1)$]
Let $T$ be a Valdivia compact tree. Since $T$ is Valdivia, by \cite[Theorem 4.1]{SomaTree}, we have $\htte(T)\leq \omega_2$. Moreover, $T$ has a retractional skeleton, hence by \cite[Theorem 3.1]{SomaTree}, we have $|\ims(t)|<\omega_0$ for every $t\in \Lev_{\alpha}(T)$ with $\cf(\alpha)=\omega_1$. Hence, in particular $|\ims(t)|<\omega_0$  for every $t\in R$ and $(i)$ is fulfilled. We shall prove that $(ii)$ holds as well. In order to do that, let $\alpha <\omega_2$ and $\cf(\alpha)=\omega_1$. Since $T$ is a Valdivia compact space, by Theorem \ref{T0SeparatingFamily} the subtree $T_{\alpha+1}$ is Valdivia as well. Hence, by point $(iii)$ of the same theorem, there exists a family $\mathcal{U}_{\alpha}$ of clopen subsets of $T_{\alpha+1}$ that is $T_0$-separating and point countable on $D_{\alpha}=\{t\in T_{\alpha+1}: \cf(t)\leq \omega_0\}$. Each element $U\in\mathcal{U}_{\alpha}$ is of the form $W_{s}^{F}$ for some $s\in T_{\alpha+1}$ and a finite subset $F\subset T_{\alpha+1}$, whose elements are bigger than $s$ and on a successor level.\\
For every $t\in R\cap\Lev_{\alpha}(T)$ there is $\eta(t)<\alpha$, such that if $U\in\mathcal{U}_{\alpha}$, $t\in U$, $\htte(\min U,T_{\alpha+1})>\eta(t)$, then $U\cap \ims(t)=\emptyset$. Indeed, since for every $s\in \ims(t)$, $s\in D_{\alpha}$, we have that $s$ is contained in countably many elements of $\mathcal{U}_{\alpha}$. For this reason there are only countably many elements of $\mathcal{U}_{\alpha}$ containing both $t$ and $s$. It is enough to take 
\begin{equation*}
\eta(t)=\sup\{\htte(p,T_{\alpha+1}): p<t, \,	\exists\,\, W_{p}^{F}\in\mathcal{U_{\alpha}}: t\in W_{p}^{F},\ims(t)\cap W_{p}^{F}\neq \emptyset\}.
\end{equation*}
Let $R_{\eta}=\{t\in R\cap\Lev_{\alpha}(T):\eta(t)=\eta\}$.\\
Let $t\in R_{\eta}$. Since $t\notin D_{\alpha}$, there exists an unbounded subset $S_t$ of $\hat{t}$ such that for each $s\in S_t$ there exists $W_{s}^{F}\in \mathcal{U}_{\alpha}$ with $t\in W_{s}^{F}$. In particular, since $S_t$ is unbounded, there exists $s_0\in S_t$, and an open basic subset $W_{s_0}^{F}\in \mathcal{U}_{\alpha}$,  with $\htte(s_0,T_{\alpha+1})>\eta$. Since $F$ is finite and $\ims(p)\cap W_{s_0}^{F}=\emptyset$ if $p\in R_\eta$, we have that $|W_{s_0}^{F}\cap R_{\eta}|<\omega_0$. Therefore there exists $r\in T_{\alpha+1}$ on a successor level such that $s_0\leq r<t$ and $V_r\cap R_{\eta}=\{t\}$. Hence $R_{\eta}$ is relatively discrete for each $\eta<\omega_1$, which gives us the assertion.
\end{proof}

We observe that the second statement of Theorem \ref{caratValdiviaGeneral} cannot be reversed. Indeed, there are several examples of Valdivia trees with height equal to $\omega_2$. Here we provide an easy example of such a space. Let $X$ be the topological sum of the ordinal intervals $X_{\alpha}=[0,\alpha]$ where $\alpha<\omega_2$. Let $X_{0}=X\cup\{\infty\}$ be the one-point compactification of $X$. By \cite[Theorem 3.35]{Kalenda2}, $X_0$ is a Valdivia compact space. Consider the following relation on $X_0$:
\begin{itemize}
\item $\infty$ is the least element,
\item $x<y$ in $X$ if and only if there exists $\alpha<\omega_2$ such that $x,y\in X_{\alpha}$ and $x<y$ in $X_{\alpha}$.
\end{itemize}
It is clear that $(X_0,<)$ is a tree and, if it is endowed with the coarse wedge topology, is homeomorphic to $X_0$ with the topology given by the compactification. Therefore we obtained the desired tree.\\
Much more interesting is the following problem that as far as we know seems to be open.
\begin{problem}
Can the first statement of Theorem \ref{caratValdiviaGeneral} be reversed?
\end{problem}

In order to prove the second statement of Theorem \ref{caratValdiviaGeneral}, we need to describe a natural way to extend relatively open subsets to the whole tree. Let $T$ be a tree of height equal to $\alpha$ and let $\beta<\alpha$ be on a successor level. Let $U\subset T_{\beta}$ be a relatively open set in $T_{\beta}$. We extend $U$ to the whole tree as follows:
\begin{equation*}
\widetilde{U}=U\cup (\bigcup_{x\in \Lev_{\beta}(T)\cap U}V_x).
\end{equation*}
It is clear that $\widetilde{U}$ is open in $T$. Given a family $\mathcal{U}_{\beta}$ of open subsets of $T_{\beta}$ we denote by $\widetilde{\mathcal{U}}_{\beta}$ the family of the extended elements of $\mathcal{U}_{\beta}$.\\
Given a family $\mathcal{U}$ of clopen subsets of $T$ we put $\mathcal{U}(t)=\{U\in\mathcal{U}:t\in U\}$, for every $t\in T$. If $A,B\subset T$ and $A\cap B=\{t\}$, by an abuse of the notation, $\mathcal{U}(A\cap B)$ means $\mathcal{U}(t)$. We need three technical lemmata.\\
Let $T$ be a tree with height less or equal to $\alpha +1$, where $\alpha$ has uncountable cofinality. Let $\{\alpha_{\gamma}\}_{\gamma<\cf(\alpha)}$ be a continuous increasing transfinite sequence converging to $\alpha$. Let us denote by $I(\cf(\alpha))$ the subset of all successor ordinals less than $\cf(\alpha)$.\\
Suppose that for each $\gamma\in I(\cf(\alpha))$, there exists a $T_{0}$-separating family $\U_{\gamma}$ in $T_{\alpha_{\gamma}+1}$. Moreover, suppose that each element $t\in T_{\alpha_{\gamma}+1}$ belongs to at least one element of the family $\U_{\gamma}$.  For each $\gamma \in I(\cf(\alpha))$, $U\in \U_{\gamma}$ and $t\in \Lev_{\alpha_{(\gamma-1)}+1}(T)$ define $U_{t}=V_{t}\cap U$. Finally, we define a family $\U$ as follows:
\begin{equation*}
\U=\bigcup_{\gamma\in I(\cf(\alpha))}\bigcup_{U\in\mathcal{U}_{\gamma}}\{\widetilde{U}_t: t\in \Lev_{\alpha_{(\gamma-1)}+1}(T)\}.
\end{equation*} 
Now we can state the first lemma.

\begin{lem}\label{T0Ufamily}
Let $T$ be a tree with height less or equal to $\alpha + 1$, where $\alpha$ has uncountable cofinality. Let $\U$ be the family of clopen subsets of $T$ defined as above. Then $\U$ is $T_{0}$-separating in $T$.
\end{lem}

\begin{proof}
Let $s,t\in T$ such that $\htte(s,T)\leq \htte(t,T)$:
\begin{itemize}
\item if $t\in T_{\alpha_{\gamma}+1}\setminus T_{\alpha_{(\gamma-1)}}$ for some $\gamma \in I(\cf(\alpha))$, then the assertion follows from the fact that the family $\U_{\gamma}$ is $T_0$-separating in  $T_{\alpha_{\gamma}+1}$;
\item if $t\in \Lev_{\alpha_{\gamma}}(T)$ with $\gamma$ limit, then we observe that $\htte(s\wedge t,T)<\htte(t,T)$. Since $\alpha_{\gamma}$ is limit too, there is $\xi<\gamma$ successor ordinal such that $\htte(s\wedge t,T)<\alpha_{\xi-1}$. We define $\{u\}=\hat{t}\cap \Lev_{\alpha_{\xi}+1}(T)$, let us consider two cases. If $\{v\}=\hat{s}\cap\Lev_{\alpha_{\xi}+1}(T)$, then, since $u,v\in T_{\alpha_{\xi}+1}$ and $\U_{\xi}$ is $T_{0}$-separating in $T_{\alpha_{\xi}+1}$, there is $U\in \U_{\xi}$ such that $|U\cap\{u,v\}|=1$. It follows that $|\widetilde{U}\cap \{s,t\}|=1$. Otherwise suppose that $\hat{s}\cap\Lev_{\alpha_{\xi}+1}(T)=\emptyset$. Similarly there exists $U\in \U_{\xi}$ such that $|U\cap\{u,s\}|=1$. Thus we have $|\widetilde{U}\cap \{s,t\}|=1$;
\item if $t\in \Lev_{\alpha}(T)$ we use the same argument as in the previous item.
\end{itemize}
Therefore $\U$ is a $T_{0}$-separating family in $T$.
\end{proof}

\begin{lem}\label{LemNum}
Let $T$ be a tree with height greater than $\eta$, where $\cf(\eta)\geq \omega_1$. Let $N$ be a countable subset of $\Lev_{\eta}(T),$ then there exists $\delta<\eta$ such that if $t_1,t_2\in N$, then $\htte(t_1\wedge t_2,T)<\delta$.
\end{lem}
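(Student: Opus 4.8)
The plan is to exploit the hypothesis $\cf(\eta)\geq\omega_1$ through the elementary fact that a countable set of ordinals below $\eta$ cannot be cofinal in $\eta$. First I would note that if $t_1,t_2\in N$ are distinct then, since both lie on $\Lev_{\eta}(T)$, the element $t_1\wedge t_2=\max(\hat t_1\cap\hat t_2)$ is a strict predecessor of each of them, and therefore $\htte(t_1\wedge t_2,T)<\eta$.

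Next I would pass to the set of ordinals $A=\{\htte(t_1\wedge t_2,T):t_1,t_2\in N,\ t_1\neq t_2\}$. Since $N$ is countable there are only countably many pairs of distinct elements of $N$, so $A$ is a countable subset of $\eta$. Because $\cf(\eta)\geq\omega_1$, such a set is bounded in $\eta$, i.e. $\sup A<\eta$; and since $\cf(\eta)\geq\omega_1$ forces $\eta$ to be a limit ordinal, I can take $\delta=\sup A+1$ (with the convention $\delta=0$ in the degenerate case $|N|\leq 1$, where $A=\emptyset$). Then $\delta<\eta$ and $\htte(t_1\wedge t_2,T)<\delta$ for all distinct $t_1,t_2\in N$, which is exactly the assertion.

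There is no genuine obstacle here: the whole content of the lemma is the cofinality remark. The only points needing a line of care are the bookkeeping of the degenerate cases (the statement is of interest, and is applied, for distinct $t_1,t_2$; when $N$ is empty or a singleton any $\delta$ works), and the observation that $\sup A+1<\eta$, which is immediate once one recalls that $\eta$ is a limit ordinal.
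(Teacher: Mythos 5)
Your proof is correct and is essentially the paper's own argument: the paper also collects the ordinals $\htte(t_n\wedge t_m,T)$ over distinct pairs from an enumeration of $N$ and takes $\delta$ to be their supremum plus one, the point being that a countable set of ordinals below $\eta$ is bounded since $\cf(\eta)\geq\omega_1$. You merely make explicit the cofinality remark and the degenerate cases that the paper leaves implicit.
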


\begin{proof}
Let $N=\{t_n\}_{n\in \omega_0}$ and suppose that $\{t_n\}_{n\in \omega_0}$ is a one-to-one sequence. Define $\delta_{n}^{m}=\htte(t_n\wedge t_m,T)$. The assertion follows taking $\delta=(\sup_{n,m\in\omega_0,n\neq m}\delta_{n}^{m})+1$. If $N$ is finite, we use the same argument as in the infinite case. 
\end{proof}

\begin{lem}\label{ValdiviaEta}
Let $T$ be a tree of height less or equal to $\eta +2$ where $\eta <\omega_2$ and $\cf(\eta)=\omega_1$. Suppose that:
\begin{enumerate}
\item $R=\{t\in \Lev_{\eta}(T): \ims(t)\neq\emptyset\}$ has cardinality at most $\omega_1$;
\item $|\ims(t)|<\omega_0$ for every $t\in R$;
\item $T_{\gamma+1}$ is a Valdivia compactum for every $\gamma<\eta$ .
\end{enumerate}
Then $T$ is a Valdivia compact space.
\end{lem}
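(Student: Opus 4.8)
The plan is to establish clause $(iii)$ of Theorem~\ref{T0SeparatingFamily} for $T$, i.e.\ to build a $T_0$-separating family $\mathcal{U}$ of basic clopen subsets of $T$ that is point-countable exactly on $D=\{t\in T:\cf(t)\le\omega_0\}$. Write $L=\Lev_\eta(T)$ and $L^{+}=\Lev_{\eta+1}(T)=\bigcup_{t\in R}\ims(t)$. Since $\cf(\eta)=\omega_1$, every point of $L$ lies outside $D$ and every point of $L^{+}$ lies in $D$; by $(1)$ and $(2)$ we get $|L^{+}|\le|R|\cdot\omega_0\le\omega_1$, and each $x\in L^{+}$ is maximal, so $V_x=\{x\}$ is basic clopen. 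Fix a strictly increasing continuous sequence $(\eta_\xi)_{\xi<\omega_1}$ with $\eta_0=0$ and $\sup_\xi\eta_\xi=\eta$, and for $p\in T_{<\eta}$ let $\xi(p)$ be the unique $\xi$ with $\eta_\xi\le\htte(p,T)<\eta_{\xi+1}$. I would assemble $\mathcal{U}$ from a ``lower'' piece dealing with $T_{<\eta}$ and a ``top'' piece dealing with $L\cup L^{+}$.

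\emph{The lower piece.} For each $\xi<\omega_1$, hypothesis $(3)$ gives that $T_{\eta_\xi+1}$ is Valdivia, so Theorem~\ref{T0SeparatingFamily} yields a family $\mathcal{U}_\xi$ of basic clopen subsets of $T_{\eta_\xi+1}$, $T_0$-separating and point-countable exactly on $D\cap T_{\eta_\xi+1}$, and — as in the proof of Theorem~\ref{caratValdiviaGeneral}$(1)$ — we may assume each member has the form $W_s^{F}$ with $F$ a finite antichain of successor-level elements above $s$. A direct computation shows that the extension $U\mapsto\widetilde U$ defined before the lemma sends $W_s^{F}$ to $V_s\setminus\bigcup_{r\in F}V_r$ computed in $T$, again basic clopen. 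I would then take the lower piece to be $\bigcup_{\xi<\omega_1}\mathcal{U}_\xi'$, where $\mathcal{U}_\xi'=\{\widetilde U:U=W_s^{F}\in\mathcal{U}_\xi,\ \htte(s,T)\ge\eta_{\xi-1}\}$ (all of $\widetilde{\mathcal{U}_0}$ when $\xi=0$). The truncation makes each member of $\mathcal{U}_\xi'$ live on levels $\ge\eta_{\xi-1}$, so a fixed $p\in T_{<\eta}$ meets members of only finitely many $\mathcal{U}_\xi'$; since the cofinality of the level of $p$ is the same in $T$ as in any $T_{\eta_\xi+1}$ containing $p$, and since each successor-level $s$ is point-countable in every $\mathcal{U}_\xi$ (so only countably many members of $\mathcal{U}_\xi$ share a given $s$), one checks that $p$ lies in countably many members of the lower piece when $p\in D$ and in uncountably many when $\cf(p)=\omega_1$, and that distinct points of $T_{<\eta}$ are separated. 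This part is a routine, if fiddly, transfinite bookkeeping and is not the main point.

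\emph{The top piece.} To the lower piece I would adjoin: all singletons $\{x\}$, $x\in L^{+}$; for each pair of distinct $t,t'\in L$ a set $V_s$ with $s\le t$, $s\nleq t'$, on a successor level (separating them from each other and each $t\in L$ from the points below it); and a family $\mathcal{U}^{\sharp}$ of ``pump'' sets $W_s^{G}$, where $s$ is a successor-level element of $T_{<\eta}$ and $G\subseteq L^{+}$ is \emph{finite} with every member of $G$ above $s$. Such a set contains every $t\in L$ above $s$ and meets $G$ in nothing; the point is that, by $(2)$, for $t\in R$ the cone $V_t\setminus\{t\}=\ims(t)$ is finite and can be excised. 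The task is to choose, for each $t\in L$, uncountably many pairs $(s,G)$ with $t\in W_s^{G}$ so that each $x\in L^{+}$ lies in only countably many of the selected sets. If for $t\in L$ cofinally many $s\in\hat t$ have $R\cap V_s$ finite, one takes $G=L^{+}\cap V_s$ and is done, since these sets miss $L^{+}$ entirely. The remaining $t$ are those at which $R$ clusters densely from below, and there I would use $|R|\le\omega_1$ together with an analysis of the meet-heights $\{\htte(t\wedge t',T):t,t'\in R\}\subseteq\eta$ and of fixed cofinal $\omega_1$-enumerations of the branches $\hat t$, in order to distribute the finite exclusion sets $G$ coherently over all of $L$. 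With $\mathcal{U}$ the union of the lower piece, the singletons, the separating $V_s$'s and $\mathcal{U}^{\sharp}$, one verifies: $T_0$-separation on $T$; each $x\in L^{+}$ lies in exactly one singleton, in countably many members of the lower piece, and — by construction of $\mathcal{U}^{\sharp}$ — in countably many pump sets; each $t\in L$, being outside $D$, lies in uncountably many pump sets.

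The main obstacle is exactly the construction of $\mathcal{U}^{\sharp}$. Any basic clopen set through $t\in R$ that descends far enough toward the root swallows the cones $V_x=\{x\}$ over $x\in\ims(t)$ unless those finitely many points are removed explicitly; when $R$ accumulates at $t$, a single descending level already lies below uncountably many points of $L^{+}$, so the finite exclusions must be orchestrated over all of $L$ and all levels at once, and it is precisely here that the cardinality bound $(1)$ and the finite-branching condition $(2)$ are used. I expect this step, together with the verification that the assembled family has the correct point-count on every point, to be the technical heart of the proof; the stitching of the lower piece and the contribution of level $\eta+1$ beyond its isolated points are comparatively routine.
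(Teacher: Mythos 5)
Your overall strategy is the same as the paper's: verify condition $(iii)$ of Theorem \ref{T0SeparatingFamily} by combining the extended $T_0$-separating families coming from the Valdivia initial segments $T_{\eta_\xi+1}$ with the singletons $\{x\}$, $x\in\Lev_{\eta+1}(T)$, and with basic clopen sets from which the finitely many immediate successors of elements of $R$ have been excised. You have also correctly located the crux: a clopen set through $t\in R$ that reaches down to a low level swallows $\ims(t)$, and these finite exclusions must be coordinated across all of $R$ at once so that no point of $\Lev_{\eta+1}(T)$ ends up in uncountably many members of the family.

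However, that coordination is exactly the step you do not carry out. You write that you ``would use $|R|\le\omega_1$ together with an analysis of the meet-heights $\ldots$ in order to distribute the finite exclusion sets $G$ coherently over all of $L$,'' and you yourself flag this as the technical heart; but no construction is given, so the argument is incomplete precisely where the lemma is nontrivial. In the paper this is done by enumerating $R=\{t_\alpha\}_{\alpha<\omega_1}$ and building, by transfinite recursion, a continuous increasing function $\theta:[0,\omega_1)\to[0,\eta)$ with $\sup_{\zeta<\omega_1}\theta(\zeta)=\eta$ which dominates both a fixed cofinal sequence $(\eta_\gamma)_{\gamma<\omega_1}$ and all meet-heights $\htte(t_\beta\wedge t_\gamma,T)$ for $\beta,\gamma<\zeta$; this guarantees that a cone $V_p$ rooted at level $\eta_{(\gamma-1)}+1$ meets the initial segment $\{t_\beta\}_{\beta<\alpha}$ (for the $\alpha$ determined by $\theta$) in at most one point, so each restricted-and-extended set $\widetilde U_p$ needs at most one exclusion $\ims(t_\xi)$, and point-countability at level $\eta+1$ reduces to the observation that only the countably many indices $\xi<\beta$ can contribute sets through a point of $\ims(t_\beta)$. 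Without some such device your family $\mathcal{U}^{\sharp}$ is simply not defined and the point-count at $\Lev_{\eta+1}(T)$ is not verified. A secondary remark: the separating sets $V_s$ you add for each pair of distinct $t,t'\in\Lev_\eta(T)$ are unnecessary (the lower piece already separates such pairs, since $\htte(t\wedge t',T)<\eta$) and, if chosen carelessly, could themselves destroy point-countability at points of $D$ lying below uncountably many such pairs, as $\Lev_\eta(T)$ carries no cardinality bound.
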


\begin{proof}
We split the proof in two parts. In the first part we define a function $\theta:[0,\omega_1)\to[0,\eta)$ satisfying certain properties. This function $\theta$ will be defined separately in three different cases: when $R$ is uncountable, infinite countable an finite. In the second part we use the mapping $\theta$ in order to define a suitable family $\U$ of clopen subsets of $T$ that is $T_0$-separating and point-countable on $D=\{t\in T: \cf(t)\leq \omega_0\}$.\\
Suppose that $|R|=\omega_1$, then we enumerate it as $R=\{t_{\alpha}\}_{\alpha<\omega_1}$. Let $\{\eta_{\gamma}\}_{\gamma<\omega_1}$ be a continuous increasing transfinite sequence converging to $\eta$. We may suppose that $\eta_0=0$. Let us define the mapping $\theta$ by using a transfinite recursion argument. Let $\theta(0)=0$ and for each $\zeta<\omega_1$ we set
\begin{equation*}
\theta(\zeta)=\max(\eta_{\zeta},\sup\{\htte(t_{\beta}\wedge t_{\gamma},T)+1:\beta,\gamma<\zeta, t_{\beta}\neq t_{\gamma}\},\sup\{\theta(\xi)+1:\xi<\zeta\}).
\end{equation*}
We observe that $\theta$ satisfies the following conditions:
\begin{itemize}
\item for every $\alpha<\omega_1$ and $\beta,\gamma< \alpha$ $(t_{\beta}\neq t_{\gamma})$, $\htte(t_{\beta}\wedge t_{\gamma},T)<\theta (\alpha)$,
\item $\theta$ is increasing, continuous and $\sup_{\zeta<\omega_1}\theta(\zeta)=\eta$.
\end{itemize}
Let us prove that $\theta$ is continuous, the other properties of $\theta$ are clear. Let $\zeta<\omega_1$ be a limit ordinal, we need to show that $\sup_{\xi<\zeta}\theta(\xi)=\theta(\zeta)$. We observe that $\eta_{\zeta}=\sup_{\xi<\zeta}\eta_{\xi}\leq \sup_{\xi<\zeta}(\theta(\xi)+1)$, furthermore we have $\sup\{\htte(t_{\beta}\wedge t_{\gamma},T)+1: \beta, \gamma<\zeta, t_{\beta}\neq t_{\gamma}\}=\sup_{\xi<\zeta}\sup\{\htte(t_{\beta}\wedge t_{\gamma},T)+1: \beta, \gamma<\xi, t_{\beta}\neq t_{\gamma}\}\leq \sup_{\xi<\zeta}\theta(\xi)\leq \sup_{\xi<\zeta}(\theta(\xi)+1)$. Hence, by definition of $\theta(\zeta)$, we obtain $\sup_{\xi<\zeta}(\theta(\xi)+1)=\theta(\zeta)$. This proves the continuity.\\
Suppose that $|R|=\omega_0$, then we enumerate it as $R=\{t_{n}\}_{n<\omega_0}$. Let $\{\theta(\alpha)\}_{\alpha<\omega_1}$ be any continuous increasing transfinite sequence that satisfies the two following conditions $\theta(0)=0$ and $\theta(n)=\sup\{\htte(t_{m_1}\wedge t_{m_2},T):m_1<m_2<n\}$ for each $n\leq \omega_0$. Similarly it is possible to define the function $\theta$ when $R$ is finite.\\
We observe that for every $t\in T$ such that $\htte(t,T)<\eta$ and $t$ on a successor level, there exists a unique $\alpha<\omega_1$ such that $\htte(t,T)\in [\theta(\alpha),\theta(\alpha+1))$. Moreover, under the same hypothesis, there exists at most one $\beta< \alpha$ such that $t<t_{\beta}$.\\
Since $T_{\theta(\alpha)+1}$ is Valdivia, by Theorem \ref{T0SeparatingFamily}, there exists a family $\mathcal{U}_{\alpha}$ of clopen subsets of $T_{\theta(\alpha)+1}$ which is $T_0$-separating and point-countable on $D_{\alpha}=\{t\in T_{\theta(\alpha) +1}: \cf(t)\leq \omega_0\}$ for every $\alpha<\omega_1$. Moreover, the elements of $\mathcal{U}_{\alpha}$ are of the form $W_{s}^{F}$ for every $\alpha<\omega_1$. Finally we may suppose that each element of $T_{\theta(\alpha)+1}$ is contained in some element of the family $\mathcal{U}_{\alpha}$ (for example adding to the family $\mathcal{U}_{\alpha}$ the element $T_{\theta(\alpha)+1}$).\\
In order to define a family $\mathcal{U}$ of clopen subsets of $T$ which is $T_{0}$-separating and point-countable on $D$, we are going to select and opportunely modify a suitable subfamily of $\bigcup_{\alpha<\omega_1}\mathcal{U}_{\alpha}$.\\
Let $\alpha<\omega_1$ be a successor ordinal and $U\in \mathcal{U}_{\alpha}$. For every $t\in \Lev_{\theta(\alpha - 1)+1}(T)$ let $U_t=U\cap V_t$. We recall that if $t\in \Lev_{\theta(\alpha-1)+1}(T)$, then $|V_t\cap \{t_{\beta}\}_{\beta<\alpha-1}|\leq 1$. Therefore if $V_{t}\cap \{t_{\beta}\}_{\beta<\alpha-1}=\emptyset$, then we extend $U_t$ as $\widetilde{U}_t$, while if 
$V_{t}\cap\{t_{\beta}\}_{\beta<\alpha-1}=\{t_{\gamma}\}$ for some $\gamma<\alpha-1$, then we extend $U_t$ as $\widetilde{U}_t\setminus \ims(t_{\gamma})$ obtaining a clopen subset of $T$ that avoid the subset $\bigcup_{\beta<\alpha-1}\ims(t_{\beta})$.\\
Define $I(\omega_1)$ as the set of successor ordinals less than $\omega_1$. We define the following family of clopen subsets of $T$:
\begin{equation*}
\begin{split}
\mathcal{U}=&\{\{t\}: t\in \Lev_{\eta+1}(T)\}\\
&\cup\bigcup_{\alpha\in I(\omega_1)}\bigcup_{U\in\mathcal{U}_{\alpha}}\{\widetilde{U}_t: t\in \Lev_{\theta(\alpha-1)+1}(T),\,V_t\cap \{t_{\beta}\}_{\beta< \alpha-1 }=\emptyset\}\\
&\cup\bigcup_{\alpha\in I(\omega_1)}\bigcup_{U\in\mathcal{U}_{\alpha}}\{\widetilde{U}_t\setminus\ims(t_{\xi}): t\in \Lev_{\theta(\alpha-1)+1}(T),\, \{t_{\xi}\}= V_t\cap \{t_{\beta}\}_{\beta< \alpha-1 }\}.
\end{split}
\end{equation*}
Now we are going to prove that $T$ is Valdivia. We observe that the family $\mathcal{U}$ restricted to $T_{\eta}$ satisfies the hypothesis of Lemma \ref{T0Ufamily}. Therefore combining Lemma \ref{T0Ufamily} with the fact that the family $\{\{t\}: t\in \Lev_{\eta+1}(T)\}$ is contained in $\U$ we obtain that $\U$ is $T_0$-separating in $T$.\\
It remains to prove that $\mathcal{U}$ is point-countable on $D$. Suppose that $t\in D$, then we consider the following two cases:
\begin{itemize}
\item suppose that $\htte(t,T)<\eta$. We observe that $|\{\alpha<\omega_1:\htte(t,T)>\theta(\alpha)\}|\leq\omega_0$, let us define $\alpha_0=\sup\{\alpha<\omega_1:\htte(t,T)>\theta(\alpha)\}$. Hence if $t\in U$ and $U\in \mathcal{U}$ we have that $U$ is extended from an element of a family $\mathcal{U}_{\xi}$ where $\xi\leq \alpha_0$. Since $\mathcal{U}_{\xi}$ is point-countable on $D_{\xi}\subset T_{\theta(\xi)+1}$ we have:
\begin{equation*}
|\mathcal{U}(t)|\leq|\bigcup_{\xi\leq\alpha_0}\mathcal{U}_{\xi}((\hat{t}\cap\Lev_{\theta(\xi)+1}(T)))|\leq\omega_0.
\end{equation*}
\item Suppose that $\htte(t,T)=\eta+1$, hence there exists $t_{\beta}\in R$ such that $t\in\ims(t_{\beta})$ for some $\beta<\omega_1$. Let $X\in\mathcal{U}$ such that $t\in X$. Then there are the following possibilities:
\begin{enumerate}
\item $X=\{t\}$, exactly one element of $\mathcal{U}$ has this form;
\item there exist $\xi\in I(\omega_1)$ and $s\in\Lev_{\theta(\xi-1)+1}(T)$ such that $X=\widetilde{U}_s$, for some $U\in \mathcal{U}_{\xi}$. Since $V_s\cap \{t_{\gamma}\}_{\gamma<\xi-1}=\emptyset$ we obtain $\xi\leq\beta+1$, moreover we observe that: $\hat{t}\cap\Lev_{\theta(\xi)+1}(T)\subset U_s$ and $|\mathcal{U}_{\xi}(\hat{t}\cap \Lev_{\theta(\xi)+1}(T))|\leq \omega_0$. Hence there are at most countably many elements of this form;
\item there are $\xi\in I(\omega_1)$, $s\in \Lev_{\theta(\xi-1)+1}(T)$ and $p\in R$ such that $X=\widetilde{U}_s\setminus\ims(p)$. Since $V_{s}\cap\bigcup_{\gamma<\xi-1}\ims(t_{\gamma})=\emptyset$, we have $\xi\leq\beta+1$ and since $(\hat{t}\cap\Lev_{\theta(\xi)+1}(T))\subset U_s$ and $|\mathcal{U}_{\xi}(\hat{t}\cap \Lev_{\theta(\xi)+1}(T))|\leq \omega_0$, there are at most countably many sets of this form.
\end{enumerate}
\end{itemize}
Therefore $\mathcal{U}$ is point-countable on $D$, hence $T$ is Valdivia.
\end{proof}

Now we are ready to prove the statement $(2)$ in Theorem \ref{caratValdiviaGeneral}.

\begin{proof}[Proof of Theorem \ref{caratValdiviaGeneral}, $(2)$]
In order to prove the second part of the theorem we are going to use a transfinite induction argument on the height of the tree. Let $T$ be a tree as in the hypothesis, by \cite[Theorem 4.1]{SomaTree}, if $\htte(T)\leq\omega_1+1$, then $T$ is Valdivia.\\
\\
Suppose that the assertion is true for each tree $T$ that satisfies $\htte(T)\leq \alpha +2$. Then we will prove the assertion for each tree $T$ that satisfies $\htte(T)\leq \alpha+3$.\\
Let $T$ be a tree that satisfies $\htte(T)= \alpha+3$, then, by induction hypothesis, $T_{\alpha +1}$ is a Valdivia compact space. Hence, by Theorem \ref{T0SeparatingFamily}, there exists a family $\mathcal{U}_{\alpha}$ of clopen subsets of $T_{\alpha}$ which is $T_0$-separating and point-countable on $D_{\alpha}=\{t\in T_{\alpha +1}: \cf(t)\leq \omega_0\}$. The family $\mathcal{U}=\widetilde{\mathcal{U}}_{\alpha}\cup\{\{t\}:t\in  \Lev_{\alpha +2}(T)\}$ is a family of clopen subset of $T$. It is easy to prove that $\mathcal{U}$ is $T_0$-separating and point-countable on $D=\{t\in T: \cf(t)\leq \omega_0\}$. Therefore $T$ is Valdivia.\\
\\
Suppose that the assertion is true for each tree $T$ that satisfies $\htte(T)< \alpha$ for some limit ordinal $\alpha$, then we will prove the assertion for each tree $T$ that satisfies $\htte(T)\leq \alpha +2$.  Therefore, suppose $T$ is a tree of height less or equal to $\alpha+2$. Let us consider the two different cases.\\ 
Suppose that $\alpha$ is a limit ordinal with countable cofinality. Then there exists an increasing sequence of ordinals $\{\alpha_n\}_{n\in\omega_0}$ converging to $\alpha$. By induction hypothesis the subtrees $T_{\alpha_{n}+1}$ are Valdivia compact spaces. Hence, by Theorem \ref{T0SeparatingFamily}, there exists a $T_0$-separating family of clopen subsets $\mathcal{U}_{n}$ that is point countable on $D_{n}=\{t\in T_{\alpha_n +1}: \cf(t)\leq \omega_0\}$, for each $n\in\omega_0$. Now we are going to prove that the family $\mathcal{U}=\bigcup_{n\in\omega_0}\widetilde{\mathcal{U}}_{n }\cup \{\{t\}:t\in \Lev_{\alpha+1}(T)\},$ is a $T_0$-separating family of clopen subset of $T$ that is point countable on $D=\{t\in T: \cf(t)\leq \omega_0\}$. Suppose that $t\in T$, then let us consider the three possibilities:
\begin{itemize}
\item if $t\in \Lev_{\alpha}(T)$, then $\mathcal{U}(t)=\bigcup_{n\in\omega_0}\{\widetilde{U}:U\in\mathcal{U}_{n}(\hat{t}\cap\Lev_{\alpha_n+1}(T))\}$;
\item if $t\in \Lev_{\alpha+1}(T)$, then $\mathcal{U}(t)=\{\{t\}\}\cup\bigcup_{n\in\omega_0}\{\widetilde{U}:U\in\mathcal{U}_{n}(\hat{t}\cap\Lev_{\alpha_n+1}(T))\}$;
\item if $t\in D\cap \bigcup_{n\in\omega_0} T_{\alpha_n+1}$, then $\mathcal{U}(t)=\bigcup_{n\in\omega_0}\{\widetilde{U}:U\in\mathcal{U}_n(t)\}$;
\end{itemize}
in all cases $\mathcal{U}(t)$ is countable, hence $\mathcal{U}$ is point-countable on $D$. Let us prove that $\U$ is $T_0$-separating on $T$. For this purpose let $s,t\in T$ be two elements that satisfy $\htte(s,T)\leq \htte(t,T)$:
\begin{itemize}
\item if $\htte(t,T)<\alpha$, then $s,t \in T_{\alpha_n+1}$ for some $n\in \omega_0$. The assertion follows from the fact that the family $\U_n$ is $T_0$-separating on $T_{\alpha_n+1}$;
\item if $t\in\Lev_{\alpha}(T)$, then $\htte(s\wedge t, T)<\alpha$, hence $s\wedge t\in T_{\alpha_n+1}$, for some $n\in\omega_0$. Let $\{t_1\}=\hat{t}\cap \Lev_{\alpha_{n+1}+1}(T)$ and $\{s_1\}=\hat{s}\cap \Lev_{\alpha_{n+1}+1}(T)$. Then there exists $U\in\U_{n+1}$ such that $|U\cap\{s_1,t_1\}|=1$, then the assertion follows observing that $\widetilde{U}\in \U$ and $|\widetilde{U}\cap\{s,t\}|=1$; 
\item if $t\in \Lev_{\alpha +1}(T)$, then the assertion follows observing that $\{t\}\in \U$. 
\end{itemize}
Suppose that $\alpha$ is a limit ordinal with uncountable cofinality. Then there exists an increasing continuous transfinite sequence of ordinals $\{\alpha_{\gamma}\}_{\gamma<\omega_1}$ converging to $\alpha$. Since $T$ satisfies $(ii)$ we have that $\Lev_{\alpha}(T)\cap R=\bigcup_{\xi<\omega_1}A_{\xi}$, where $A_{\xi}$ is relatively discrete in $T$ for each $\xi<\omega_1$, we may suppose that the family $\{A_{\xi}\}_{\xi<\omega_1}$ is disjoint. We observe that any relatively discrete subset $B\subset \Lev_{\alpha}(T)$ can be decomposed as $B=\bigcup_{\beta<\omega_1}B_{\beta}$ in such a way that if $s\in T$ and $\htte(s,T)>\alpha_{\beta+1}$, then $V_{s}\cap B_{\beta}$ contains at most one point. Indeed, since $B$ is relatively discrete, for each $t\in B$ there exists $s_t<t$ on a successor level such that $V_{s_t}\cap B=\{t\}$. Define $B_{\beta}=\{t\in B: \alpha_{\beta}<\htte(s_{t},T)\leq\alpha_{\beta+1}\}$. Hence we have $B=\bigcup_{\beta<\omega_1}B_{\beta}$ and if $s\in T$ is such that $\htte(s,T)>\alpha_{\beta+1}$, then we have $|V_s\cap B_{\beta}|\leq 1$. Therefore, since each $A_{\xi}$, where $\xi<\omega_1$, is a relatively discrete subset of $\Lev_{\alpha}(T)$, such a $A_{\xi}$ can be decomposed in $\omega_1$-many pieces as above. Hence we may suppose, without loss of generality, that for each $\xi<\omega_1$ there is $\beta(\xi)<\alpha$ such that for any $s\in T$ with $\htte(s,T)>\beta(\xi)$ we have $|V_s\cap A_{\xi}|\leq 1$.
Moreover we may suppose that the function $\beta$ is non-decreasing (replace $\beta(\xi)$ by $\sup\{\beta(\gamma):\gamma\leq\xi\}$).\\
Firstly let us suppose that the function $\beta$ is bounded by an ordinal $\beta_{0}<\alpha$.\\
Let $p\in \Lev_{\beta_0 +1}(T)$. Since the height of $p$ is greater than $\beta_0$, we have $|V_{p}\cap A_{\xi}|\leq 1$ for every $\xi<\omega_1$. Whence we get $|V_p\cap\bigcup_{\xi<\omega_1} A_{\xi}|\leq \omega_1$. By induction hypothesis $T_{\beta_0+1}$ is Valdivia, hence there exists a family $\mathcal{U}_{0}$ of clopen subsets of $T_{\beta_0+1}$ which is $T_{0}$-separating and point-countable on $D_{0}=\{t\in T_{\beta_0 +1}:\cf(t)\leq\omega_0\}$. Further, for any $p\in \Lev_{\beta_0 +1}(T)$, the subset $V_p\subset T$ is isomorphic to a tree satisfying the assumptions of Lemma \ref{ValdiviaEta}. Hence $V_p$ is a Valdivia compact space. Therefore there is a family of clopen sets $\mathcal{U}_p$ that is $T_0$-separating and point countable on $D_{p}=\{t\in V_p: \,\cf(t)\leq \omega_0\}$. We may assume without loss of generality that $V_{p}\in \mathcal{U}_{p}$, for every $p\in \Lev_{\beta_0 +1}(T)$. Defining $\mathcal{U}=\widetilde{\mathcal{U}}_0\cup (\bigcup_{p\in \Lev_{\beta_0 +1}(T)}\mathcal{U}_p)$ we obtain a family of clopen subsets of $T$. Let $t\in D=\{t\in T: \cf(t)\leq \omega_0\}$. We consider the two cases:
\begin{itemize}
\item if $\htte(t,T)\geq \beta_0 +1$, take $\{p_t\}=\hat{t}\cap \Lev_{\beta_0+1}(T)$. Then we have
\begin{equation*}
|\mathcal{U}(t)|\leq |\mathcal{U}_0(p_t)| +|\mathcal{U}_{p_t}(t)|\leq \omega_0;
\end{equation*}
\item if $\htte(t,T)< \beta_0 +1$, we have
\begin{equation*}
|\mathcal{U}(t)|\leq |\mathcal{U}_0(t)|\leq \omega_0.
\end{equation*}
\end{itemize}
Hence $\mathcal{U}$ is point-countable on $D$. We continue by proving that $\mathcal{U}$ is $T_{0}$-separating, let $s,t\in T$ and suppose that $\htte(s,T)\leq \htte(t,T)$.
\begin{itemize}
\item suppose that either $s,t\in T_{\beta_0 +1}$ or $s,t\in V_p$ for some $p\in \Lev_{\beta_0+1}(T).$ Then we use the fact that $\mathcal{U}_0$ $(\mathcal{U}_p)$ is $T_0$-separating on $T_{\beta_0+1}$  ($V_p$, respectively);
\item otherwise, suppose that there exists $p\in \Lev_{\beta_0+1}(T)$ such that $t\in V_p$ and $s\notin V_p$. Then we have $V_p\in \mathcal{U}$ and $s\notin V_p$.
\end{itemize}
Hence the family $\mathcal{U}$ is $T_0$-separating. Therefore $T$ is Valdivia.\\
Let us suppose that the mapping $\beta$ is unbounded. We recall that $\beta$ has the following property: if $t\in T$ and $\htte(t,T)>\beta(\xi)$, for some $\xi<\omega_1$, we have $|V_{t}\cap(\cup_{\eta\leq\xi}A_{\eta})|\leq\omega_0$.\\
We are going to define a family $\{S_{\xi}\}_{\xi<\omega_1}$ of subsets of $T$ that satisfies the following properties:

\begin{enumerate}[$(a)$]
\item $S_{\xi}\subset T_{<\alpha}$;
\item $S_{\xi}\cap S_{\eta}=\emptyset$ for $\xi\neq \eta$;
\item if $t\in \bigcup_{\gamma<\xi}S_{\gamma}$ for some $\xi<\omega_1$, then $\{s\in T:s<t\}\subset \bigcup_{\gamma<\xi}S_{\gamma}$;
\item if $t\in T_{<\alpha}\setminus (\bigcup_{\gamma\leq\xi} S_{\gamma}),$ then $\htte(t,T)>\beta(\xi+1)$;
\item if $t\in S_{\xi}$ for some $\xi<\omega_1$, then $V_t\cap(\bigcup_{\gamma<\xi}A_{\gamma})$ is at most a singleton.
\item if $t\in T_{<\alpha}\setminus (\bigcup_{\gamma\leq\xi} S_{\gamma}),$ then $V_t\cap(\bigcup_{\gamma\leq\xi}A_{\gamma})$ is at most a singleton.
\end{enumerate}

We use a transfinite induction argument. Firstly we define
\begin{equation*}
S_0=\{t\in T: \htte(t,T)\leq \beta(1)\}.
\end{equation*} 
We observe that $S_0=T_{\beta(1)}$ and $\beta(1)<\alpha$, hence $S_0$ satisfies $(a)-(f)$. Let us suppose that for every $\gamma<\eta$, $S_{\gamma}$ has been already defined such that $(a)-(f)$ are fulfilled. Now we are going to define $S_{\eta}$. Let us consider the two cases:
\begin{itemize}
\item $\eta=\gamma +1$. Let $M_{\eta}=\{t\in T: t \mbox{ is minimal in } T_{<\alpha}\setminus (\bigcup_{\zeta\leq \gamma}S_{\zeta})\}$. Fix $t\in M_{\eta}$, then we have, by induction hypothesis, $\htte(t,T)>\beta(\eta)$. Hence $|V_t\cap (\cup_{\zeta\leq\eta}A_{\zeta})|\leq \omega_0$, therefore by Lemma \ref{LemNum} there exists $\delta(t)<\alpha$ such that $\delta(t)>\beta(\eta)+1$ and if $t_0,t_1\in V_t\cap (\cup_{\zeta\leq \eta}A_{\zeta})$ we have $\htte(t_0\wedge t_1,T)<\delta(t)$. Let $z(t)=\max\{\delta(t), \beta(\eta+1)\}$ and 
\begin{equation*}
S_{\eta}=\bigcup_{t\in M_{\eta}}(V_t\cap T_{z(t)}).
\end{equation*}
Now we are going to prove that $S_{\eta}$ satisfies $(a)-(f)$. For every $t\in M_{\eta}$ we have $\beta(\eta+1)\leq z(t)<\alpha$, hence $(a)$ and $(d) $ are satisfied.   By construction $S_{\eta}\subset  T_{<\alpha}\setminus (\bigcup_{\zeta\leq \gamma}S_{\zeta})$, therefore $(b)$ is satisfied.\\
By definition of $S_{\eta}$ and the induction hypothesis it follows that if $t\in \bigcup_{\gamma<\eta+1}S_{\gamma}$, then $\{s\in T:s<t\}\subset \bigcup_{\gamma<\eta+1}S_{\gamma}$, hence $S_{\eta}$ satisfies $(c)$.\\
Let $t\in S_{\eta}$. Then $t$ belongs to $T_{<\alpha}\setminus \bigcup_{\zeta\leq \gamma}S_{\zeta},$ hence, by induction hypothesis, we have $|V_t \cap (\cup_{\zeta< \eta}A_{\zeta})|=|V_t \cap (\cup_{\zeta\leq \gamma}A_{\zeta})|\leq 1$. Hence $S_{\eta}$ satisfies $(e)$.\\
Finally we prove that $S_{\eta}$ satisfies $(f)$. Suppose that $t\in T_{<\alpha}\setminus \bigcup_{\zeta\leq \eta}S_{\zeta}$ and $\{p\}=\hat{t}\cap M_{\eta}$. Then we have $\htte(t,T)>z(p)\geq \delta(p)>\htte(t_{0}\wedge t_{1},T)$, for each $t_0,t_1 \in V_p\cap (\cup_{\zeta\leq \eta}A_{\zeta})$. Hence it follows that $|V_t \cap (\cup_{\zeta\leq \eta}A_{\zeta})|\leq 1$;
\item $\eta$ is limit. The function $\beta$ is not necessarily continuous, so we define the set $S_{\eta}$ in two steps. Let $M_{\eta}^{0}=\{t\in T: t \mbox{ is minimal in } T_{<\alpha}\setminus (\bigcup_{\gamma< \eta}S_{\gamma})\}$. Suppose that $t\in M_{\eta}^{0}$, then by induction hypothesis, we have $\htte(t,T)>\beta(\gamma)$, for every $\gamma<\eta$. Hence $|V_t\cap (\cup_{\gamma< \eta}A_{\gamma})|\leq \omega_0$, therefore by Lemma \ref{LemNum} there exists $\delta^0(t)<\alpha$ such that $\delta^0(t)>\sup_{\gamma<\eta}(\beta(\gamma))+1$ and if $t_0,t_1\in V_t\cap (\cup_{\zeta< \eta}A_{\zeta})$ we have $\htte(t_0\wedge t_1,T)<\delta(t)$. Let $z^0(t)=\max\{\delta^0(t), \beta(\eta)\}$ and 
\begin{equation*}
S_{\eta}^{0}=\bigcup_{t\in M_{\eta}^{0}}(V_t\cap T_{z^0(t)}).
\end{equation*}
Let $M_{\eta}=\{t\in T: t \mbox{ is minimal in } T_{<\alpha}\setminus (S_{\eta}^{0} \cup (\bigcup_{\gamma< \eta}S_{\gamma}))\}$. Suppose that $t\in M_{\eta}$, then we have $\htte(t,T)>\beta(\eta)$. Hence $|V_t\cap (\cup_{\zeta\leq\eta}A_{\zeta})|\leq \omega_0$, therefore by Lemma \ref{LemNum} there exists $\delta(t)<\alpha$ such that $\delta(t)>\beta(\eta)+1$ and if $t_0,t_1\in V_t\cap (\cup_{\zeta\leq \eta}A_{\zeta})$ we have $\htte(t_0\wedge t_1,T)<\delta(t)$. Let $z(t)=\max\{\delta(t), \beta(\eta +1)\}$ and 
\begin{equation*}
S_{\eta}=S_{\eta}^{0} \cup \bigcup_{t\in M_{\eta}}(V_t\cap T_{z(t)}).
\end{equation*}
Since the definition of $S_{\eta}$ is similar to the one given in the previous case, conditions $(a)-(d)$ are verified analogously.\\
Suppose that $t\in S_{\eta}$, then $t\in T_{<\alpha}\setminus \bigcup_{\zeta<\gamma}S_{\zeta}$ for each $\gamma<\eta$. Hence, by induction hypothesis, we have $|V_t \cap \cup_{\zeta\leq\gamma}A_{\zeta}|\leq 1$, for each $\gamma<\eta$. It follows that $|V_{t}\cap \cup_{\zeta<\eta}A_{\zeta}|\leq 1$. Therefore $S_{\eta}$ satisfies $(e)$.\\
Finally we prove that $S_{\eta}$ satisfies $(f)$. Let $t\in T_{<\alpha}\setminus \bigcup_{\gamma\leq \eta}S_{\gamma}$. If $\{p\}=\hat{t}\cap M_{\eta}$, then $\htte(t,T)>z(p)\geq \delta(p)>\htte(t_0\wedge t_1, T)$ for each $t_{0},t_{1}\in V_p\cap (\cup_{\zeta\leq \eta}A_{\zeta})$. Hence we have $|V_t \cap (\cup_{\zeta\leq \eta}A_{\zeta})|\leq 1$. Arguing in an analogous way the same follows if $\{p\}=\hat{t}\cap M_{\eta}^{0}.$
\end{itemize}
By the transfinite induction hypothesis the tree $T_{\alpha_{\gamma}+1}$ is a Valdivia compact space, hence, by Theorem \ref{T0SeparatingFamily}, there is a family $\mathcal{U}_{\gamma}$ of clopen subsets of $T_{\alpha_{\gamma}+1}$ which is $T_{0}$-separating and point-countable on $D_{\gamma}=\{t\in T_{\alpha_{\gamma}+1}:\cf(t)\leq\omega_0\}$, for every $\gamma<\omega_1$. Moreover we have that the elements of each $\mathcal{U}_{\gamma}$ are of the form $W_{s}^{F}$.\\
By construction $\{S_{\xi}\}_{\xi<\omega_1}$ is a pairwise disjoint family of subsets of $T_{<\alpha}$. Indeed, since $\beta $ is unbounded, for every $t\in T_{<\alpha}$ there is $\xi<\omega_1$ with $\htte(t,T)<\beta(\xi)$ and so by $(d)$, $t\in \bigcup_{\gamma<\xi}S_{\xi}$. Thus, taking into account $(b)$, for every $t\in T_{<\alpha}$ there exists a unique $\xi<\omega_1$ with $t\in S_{\xi}$.\\
Let $\phi:T_{<\alpha}\to [0,\omega_1)$ be the function that satisfies $t\in S_{\phi(t)}$, for every $t\in T_{<\alpha}$. Let $I(\omega_1)$ be the set of successors ordinals less than $\omega_1$.\\
Let $\gamma \in I(\omega_1)$. Define $U_p=V_p\cap U$ for any  $U\in\mathcal{U}_{\gamma}$ and $p\in \Lev_{\alpha_{(\gamma-1)}+1}(T)$. We observe that $\min(U_p)\in S_{\phi(\min(U_p))}$, hence we obtain from $(e)$ that $|\widetilde{U}_p \cap \cup_{\gamma<\phi(\min(U_p))}A_{\gamma}|\leq 1$. If $\widetilde{U}_p\cap \cup_{\gamma<\phi(\min(U_p))}A_{\gamma}=\emptyset$, then $\widetilde{U}_p$ is a clopen subset of $T$ that does not intersect the set $\cup_{\gamma<\phi(\min(U_p))} A_{\gamma}$. Similarly, if $\widetilde{U}_p\cap \cup_{\gamma<\phi(\min(U_p))}A_{\gamma}=\{s\}$, then $\widetilde{U}_p\setminus \ims(s)$ is a clopen subset of $T$ that does not intersect the set $\cup_{\gamma<\phi(\min(U_p))}\cup_{s\in A_{\gamma}}\ims(s)$. Therefore we define a family $\mathcal{U}$ of clopen subsets of $T$ as follows:
\begin{equation*}
\begin{split}
\mathcal{U}=&\{\{t\}: t\in \Lev_{\alpha+1}(T)\}\\
&\cup\bigcup_{\gamma\in I(\omega_1)}\bigcup_{U\in\mathcal{U}_{\gamma}}\{\widetilde{U}_p: p\in \Lev_{\alpha_{(\gamma-1)}+1}(T), \, \widetilde{U}_p\cap \bigcup_{\eta<\phi(\min(U_p))} A_{\eta}=\emptyset\}\\
&\cup\bigcup_{\gamma\in I(\omega_1)}\bigcup_{U\in\mathcal{U}_{\gamma}}\{\widetilde{U}_p\setminus\ims(s): p\in \Lev_{\alpha_{(\gamma-1)}+1}(T),\, \widetilde{U}_p\cap \bigcup_{\eta<\phi(\min(U_p))} A_{\eta}=\{s\}\}.
\end{split}
\end{equation*}
It remains to prove that $\mathcal{U}$ is a  family of clopen subsets which is $T_{0}$-separating and point-countable on $D=\{t\in T:\cf(t)\leq\omega_0\}$. We observe that the family $\mathcal{U}$ restricted to $T_{\alpha}$ satisfies the hypothesis of Lemma \ref{T0Ufamily}. Therefore combining Lemma \ref{T0Ufamily} with the fact that the family $\{\{t\}: t\in \Lev_{\eta+1}(T)\}$ is contained in $\U$ we obtain that $\U$ is $T_0$-separating in $T$.\\
Now we are going to prove that $\mathcal{U}$ is point-countable on $D$. Let $t\in D$ and consider the following two cases.
\begin{itemize}
\item Suppose that $\htte(t,T)<\alpha$. We define $\gamma_0=\min\{\gamma<\omega_1:\htte(t,T)<\alpha_{\gamma}\}$, such a $\gamma_0$ exists since $\htte(t,T)<\alpha$ and $\{\alpha_{\gamma}\}_{\gamma<\omega_1}$ is a continuous increasing transfinite sequence converging to $\alpha$. Since the transfinite sequence $\{\alpha_{\gamma}\}_{\gamma<\omega_1}$ is continuous and $t$ belongs to a successor level, we have that $\gamma_0\in I(\omega_1)$.\\
Suppose that $U_p\subset T_{\alpha_{\xi}+1}\setminus T_{\alpha_{\xi-1}}$ for some $\xi\in I(\omega_1)$. Let us consider three cases. If $\xi\geq \gamma_0 +1$, it follows that $t\notin \widetilde{U}_p$. If $\xi<\gamma_0$, since $\widetilde{U}_p=U_p\cup (\cup_{x\in \Lev_{\alpha_{\xi}+1}(T)\cap U}V_x)$, it follows that $t\in \widetilde{U}_p$ if and only if $\hat{t}\cap \Lev_{\alpha_{\xi}+1}(T)\subset U_p$. Finally, if $\xi=\gamma_0$, we have $t\in \widetilde{U}_p$ if and only if $t\in U_p$.\\
Hence, since $\mathcal{U}_{\xi}$ is point-countable on $D_{\xi}\subset T_{\alpha_{\xi}+1}$ for each $\xi<\gamma_0 +1$, we have that $\mathcal{U}(t)$ is countable as well.
\item Let $t \in \Lev_{\alpha+1}(T)$. Then there exists $s\in R\cap \Lev_{\alpha}(T)$ such that $t\in\ims(s)$. There exists $\gamma<\omega_1$ such that $s\in A_{\gamma}$. Take any element $X$ of the family $\mathcal{U}$ containing $t$, then there are the following possibilities:
\begin{itemize}
\item $X=\{t\}$, exactly one element of $\mathcal{U}$ has this form;
\item $X\neq \{t\}$. It follows from $(c)$ and $(d)$ that the restriction of the function $\phi$ on $\{u\in T:u<s\}$ is non-decreasing and unbounded. Let $u_0\in T$ be the minimal element $u<s$ such that $\phi(u)\geq\gamma$. Let $\xi_0<\omega_1$ be the minimal ordinal $\xi<\omega_1$ such that $u_0\in T_{\alpha_{\xi}+1}$.\\
If $\eta\in I(\omega_1)$ and $p\in \Lev_{\alpha_{\eta-1}+1}(T)$ such that $X=\widetilde{U}_p$  for some $U_p\subset T_{\alpha_{\eta}+1}\setminus T_{\alpha_{\eta-1}}$, then, since $\widetilde{U}_p\cap \bigcup_{\zeta<\phi(\min(U_p))} A_{\zeta}=\emptyset$ and $s\in A_{\gamma}$, we obtain $\phi(\min(U_p))\leq\gamma$ and therefore $\eta\leq \xi_0 +1$. Similarly, if $X=\widetilde{U}_p\setminus \ims(u)$ for some $u\in R$, then, since $(\widetilde{U}_p\setminus\ims(u))\cap \bigcup_{\zeta<\phi(\min(U_p))}\bigcup_{r\in A_{\zeta}} \ims(r)=\emptyset$ and $t\in X$, we obtain $\eta\leq \xi_0 +1$.\\
Since $\widetilde{U}_p=U_p\cup (\cup_{x\in \Lev_{\alpha_{\xi}+1}(T)\cap U}V_x)$, it follows that $t\in \widetilde{U}_p$ $(t\in \widetilde{U}_p\setminus\ims(u))$ if and only if $\hat{t}\cap \Lev_{\alpha_{\xi}+1}(T)\subset U_p$.\\
Hence, since $\mathcal{U}_{\xi}$ is point-countable on $D_{\xi}\subset T_{\alpha_{\xi}+1}$, for each $\xi\leq\xi_0+1$ we have that $\mathcal{U}(t)$ is countable as well.
\end{itemize}
\end{itemize}
Therefore $T$ is Valdivia. This concludes the proof.
\end{proof}

\section{Banach spaces of continuous functions on trees}

In this section we deal with the space of continuous functions on a tree $T$. We will prove that Valdivia compact trees can be characterized by their space of continuous functions, as we will prove that $T$ is Valdivia if and only if $C(T)$ is $1$-Plichko ($T$ has a retractional skeleton if and only if $C(T)$ has a $1$-projectional skeleton). Notice that, in general, this is not true: there are examples of a non Valdivia compacta $K$ such that $C(K)$ is $1$-Plichko see \cite{BanaKu}, \cite{Kalenda3}, \cite{KubisUspe}.\\
In the final part of this section, we will prove that each $C(T)$ space, where $T$ is a tree with height less than $\omega_1\cdot \omega_0$, is a Plichko space. Using this result, we observe that, the tree $T$ defined as in \cite[Example 4.3]{SomaTree}, is an example of compact space with retractional skeletons none of which is commutative, however $C(T)$ is a Plichko space, therefore it has a commutative projectional skeleton (see \cite[Theorem 27]{Kubis1}).

\begin{thm}\label{1PlichkoValdivia}
Let $T$ be a tree. Then $T$ is a Valdivia compact space if and only if $C(T)$ is a $1$-Plichko space.
\end{thm}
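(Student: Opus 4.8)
The plan is to prove the two implications separately, and in each direction to translate between the combinatorial $T_0$-separating family description of Valdivia trees (Theorem \ref{T0SeparatingFamily}) and the functional-analytic notion of a $\Sigma$-subspace of $C(T)^*$. The natural bridge is the identification of $C(T)$ with $C(\text{Stone of }\Clop(T))$ and the fact that the characteristic functions $\chi_U$ of basic clopen sets $U=W_s^F$ span a dense subspace of $C(T)$; these are the candidate markers $M\subset C(T)$ for the $\Sigma$-subspace, and the corresponding $\Sigma$-subspace will be the set of measures $\mu\in C(T)^*$ for which $\mu(U)\neq 0$ for only countably many such $U$.

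For the forward direction, suppose $T$ is Valdivia. By Theorem \ref{T0SeparatingFamily} fix a $T_0$-separating family $\mathcal{U}$ of basic clopen sets that is point-countable exactly on $D=\{t\in T:\cf(t)\leq\omega_0\}$; we may assume $\mathcal{U}$ is closed under finite Boolean combinations after the reduction, and that each $t\in T$ lies in some member of $\mathcal{U}$. Put $M=\{\chi_U:U\in\mathcal{U}\}$. First I would check $\overline{\spanne}(M)=C(T)$: since $\mathcal{U}$ is $T_0$-separating and consists of clopen sets, the subalgebra it generates separates points of $T$, so by Stone--Weierstrass its span is dense. Then define $S=\{\mu\in C(T)^*:\{U\in\mathcal{U}:\mu(\chi_U)\neq0\}\text{ is countable}\}$, which is by definition a $\Sigma$-subspace associated to $M$. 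The work is to show $S$ is $1$-norming, equivalently that the evaluation functionals $\delta_t$, $t\in T$, lie in $S$ (since $\|f\|=\sup_t|f(t)|$ and these attain the norm). But $\delta_t(\chi_U)=\chi_U(t)\neq0$ iff $t\in U$, and $\{U\in\mathcal{U}:t\in U\}$ is countable precisely when $t\in D$. For $t\notin D$ (i.e. $\cf(t)=\omega_1$) one must instead observe that $\delta_t$ is not needed: such $t$ is a limit of a net of points $s<t$ on successor levels, each in $D$, and $\delta_t$ is a weak$^*$-limit of $\delta_s$'s; since the norm of $f$ on the dense subspace $\spanne(M)$ is already computed by $\{\delta_s:s\in D\}$, the closed span argument gives $1$-norming from $\{\delta_s:s\in D\}\subset S$ alone. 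This is the step I expect to require the most care: one must verify $\{\delta_s:s\in D\}$ genuinely norms $C(T)$, which amounts to showing $D$ is dense in $T$ — true because every $t$ with $\cf(t)=\omega_1$ is a limit point of $\hat t\cap(\text{successor levels})\subset D$, while every successor-level or minimal point is itself in $D$.

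For the converse, suppose $C(T)$ is $1$-Plichko, so $C(T)^*$ has a $1$-norming $\Sigma$-subspace $S$ given by some $M\subset C(T)$ with $\overline{\spanne}(M)=C(T)$. I would use the standard fact (from Kalenda's survey, \cite{Kalenda2}) that for a totally disconnected compact $K$ a $1$-norming $\Sigma$-subspace of $C(K)^*$ can be taken generated by characteristic functions of clopen sets, and that the associated "$\Sigma$-point" set $A=\{x\in K:\delta_x\in S\}$ is then a dense $\Sigma$-subset of $K$; this gives that $T$ is Valdivia directly, or alternatively produces a $T_0$-separating point-countable family of clopen sets on which one invokes Theorem \ref{T0SeparatingFamily}. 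The one thing to pin down is that the relevant $\Sigma$-subset is dense — which follows because $S$ is norming, hence $\{\delta_x:\delta_x\in S\}$ must be norming, hence $w^*$-dense in $B_{C(T)^*}$, forcing $A$ dense in $T$. I would also remark that the same argument, replacing "$1$-Plichko / Valdivia" by "$1$-projectional skeleton / retractional skeleton" and "$\Sigma$-subspace" by "countably norming", yields the parenthetical statement about skeletons, citing \cite{Kubis1} and \cite{KubisMicha} for the correspondence between retractional skeletons on $K$ and $1$-projectional skeletons on $C(K)$. The main obstacle overall is bookkeeping with the non-$D$ points: ensuring that the measures/evaluations that fail to lie in $S$ are exactly the ones that the norming condition does not require, which is precisely the content of the point-countability being "exactly on $D$".
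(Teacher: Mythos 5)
Your forward direction is fine in outline and is essentially a hands-on version of what the paper obtains by simply citing \cite[Theorem 5.2]{Kalenda2} (one small caveat: you cannot close $\mathcal{U}$ under all finite Boolean combinations, since taking complements destroys point-countability on $D$; closing under finite intersections and adjoining $T$ itself is enough for the Stone--Weierstrass step). The converse, however, contains a genuine gap, and it sits exactly at the step you describe as ``the one thing to pin down.'' You claim that since $S$ is $1$-norming, the set $\{\delta_x:\delta_x\in S\}$ must itself be norming, hence $A=\{x:\delta_x\in S\}$ must be dense. This inference is false: the functionals in $S\cap B_{C(T)^*}$ that nearly attain $\|f\|$ can be arbitrary measures rather than point evaluations, so a $1$-norming $\Sigma$-subspace need not contain any Dirac measures on a prescribed dense set. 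If the inference were valid, your argument (which uses nothing about $T$ beyond zero-dimensionality) would prove that $C(K)$ being $1$-Plichko implies $K$ Valdivia for every compact $K$, contradicting the counterexamples the paper itself cites (\cite{BanaKu}, \cite{KubisUspe}). Likewise, the ``standard fact'' that the $\Sigma$-point set of a $1$-norming $\Sigma$-subspace of $C(K)^*$ is automatically a dense $\Sigma$-subset of $K$ is not a fact; \cite[Theorem 5.2]{Kalenda2} needs the density of $S\cap\delta(K)$ in $\delta(K)$ as an additional hypothesis.

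The missing content is precisely where the tree structure enters, and it is the bulk of the paper's proof: one shows that $\delta_t\in S$ for every $t$ on a successor level (such points are dense in $T$). If $\ims(t)$ is finite, then $t$ is isolated and $1$-norming applied to $\chi_{\{t\}}$ forces $\delta_t\in S$. If $\ims(t)$ is infinite, one picks distinct $t_n\in\ims(t)$, uses $1$-norming to find $\mu_n\in S\cap B_{C(T)^*}$ with $\mu_n(V_{t_n})=1$, deduces $\supp(\mu_n)\subset V_{t_n}$ from $\|\mu_n\|\leq 1$, checks that $\mu_n\to\delta_t$ weak$^*$ (using that for each $f\in C(T)$ and $\varepsilon>0$ only finitely many $s\in\ims(t)$ satisfy $\sup_{p\in V_s}|f(p)-f(t)|>\varepsilon$), and concludes $\delta_t\in S$ from the weak$^*$ countable closedness of $\Sigma$-subspaces. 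Only then does one invoke \cite[Theorem 5.2]{Kalenda2} to see that $B_{C(T)^*}$ is Valdivia with $S\cap B_{C(T)^*}$ as a $\Sigma$-subset, so that $\delta(T)$, being a closed subset meeting this $\Sigma$-subset in a dense set, is Valdivia. Without an argument of this kind your converse does not go through.
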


\begin{proof}
The "only if" part is a particular case of \cite[Theorem 5.2]{Kalenda2}. Suppose that $C(T)$ is a $1$-Plichko space and let $S\subset C(T)^*$ be a $1$-norming $\Sigma$-subspace. The compact space $T$ embeds canonically into $B_{C(T)^*}$ by identifying each $t\in T$ with the Dirac measure concentrated on $\{t\}$. This embedding will be denoted by $\delta$. We are going to prove that $\delta(t)=\delta_t\in S$ whenever $t\in T$ is on a successor level.\\
Pick $t\in T$ on a successor level:

\begin{itemize}
\item suppose that $\ims(t)$ is finite, it follows that $t$ is isolated. Hence, since $S$ is $1$-norming, we obtain $\delta_t\in S$;
\item on the other hand suppose that $\ims(t)$ is infinite. Let $\{t_n\}_{n\in\omega_0}$ be an infinite subset of $\ims(t)$. Since $V_{t_{n}}$ is a clopen subset of $T$ we have that the function $f_n=1_{V_{t_n}}$ is continuous for every $n\in\omega_0$. Since $S$ is a $1$-norming subset of $C(T)^*$, for each $k\in\omega_0$ there exists $\mu_{n}^{k}\in C(T)^*$ with $\|\mu_{n}^{k}\|=1$ and $\mu_{n}^{k}(f_n)>1-1/k$. Since $S$ is a $\Sigma$-subspace, by \cite[Lemma 1.6]{Kalenda2}, we have that the closure of the sequence $\{\mu_{n}^{k}\}_{k\in\omega_0}$ is contained in $S$ and moreover there exist a measure $\mu_n$ contained in the closure of $\{\mu_{n}^{k}\}_{k\in\omega_0}$ and a sequence $\{\mu_{n}^{k_j}\}_{j\in \omega_0}\subset \{\mu_{n}^{k}\}_{k\in\omega_0}$ such that $\mu_{n}^{k_j}$ converges to $\mu_n$. Hence we obtain $\mu_{n}(f_n)=\mu_n(V_{t_{n}})=1$. Observing that $\|\mu_n\|=|\mu_n|(T)\leq 1$ and $1=|\mu_n(V_{t_n})|\leq |\mu_n|(V_{t_n})$, we easily obtain that $\supp(\mu_n)\subset V_{t_n}$.\\
Now, for $f\in C(T)$ and $\varepsilon>0$ we define 
\begin{equation*}
Z_{\varepsilon}(f,t)=\{s\in\ims(t):\sup_{p\in V_s}|f(p)-f(t)|>\varepsilon\}.
\end{equation*}
By the continuity of $f$, the set $Z_{\varepsilon}(f,t)$ is finite. Hence there exists $n_0\in \omega_0$ such that 
$$\sup_{p\in V_{t_n}}|f(p)-f(t)|<\varepsilon$$
for every $n\geq n_0$. Suppose that $n\geq n_0$, then we obtain:
\begin{equation*}
\begin{split}
|\mu_{n}(f)-f(t)|&=\left|\int_{V_{t_n}}f(x)d\mu_{n}(x)-f(t)\right|=\left|\int_{V_{t_n}}f(x)-f(t)d\mu_{n}(x)\right|\\
&\leq \int_{V_{t_n}}|f(x)-f(t)|d\mu_{n}(x)<\varepsilon.
\end{split}
\end{equation*}
Hence the sequence $\mu_{n}(f)$ converges to $\delta_t(f)$ for every $f\in C(T)$. By the weak$^*$ countably closedness of $S$ it follows that $\delta_t\in S$.
\end{itemize}

Therefore, since by \cite[Theorem 5.2]{Kalenda2} $B_{C(T)^*}$ is a Valdivia compact space with $B_{C(T)^*}\cap S$ as $\Sigma$-subset and $S\cap \delta(T)$ is dense in $\delta(T)$, we obtain that $\delta(T)$ is a Valdivia compact space. 
\end{proof}

We observe that the same result can be done in the non-commutative setting. Using \cite[Proposition 3.15]{Cuth1} instead of \cite[Theorem 5.2]{Kalenda2} we obtain the following result.

\begin{thm}
Let $T$ be a tree. Then $T$ has a retractional skeleton if and only if $C(T)$ has a $1$-projectional skeleton.
\end{thm}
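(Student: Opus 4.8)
The plan is to transpose the proof of Theorem \ref{1PlichkoValdivia} to the non-commutative setting, replacing $\Sigma$-subspaces by the subspaces induced by projectional skeletons and Valdivia compacta by compacta admitting a retractional skeleton. The ``only if'' implication is again obtained from general theory: if $T$ has a retractional skeleton then, by \cite[Proposition 3.15]{Cuth1}, $C(T)$ carries a $1$-projectional skeleton.

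For the converse, assume $C(T)$ has a $1$-projectional skeleton $(P_s)_{s\in\Gamma}$ and let $D=\bigcup_{s\in\Gamma}P_s^*(C(T)^*)$ be the subspace of $C(T)^*$ induced by it. I would first record the properties of $D$ that the argument needs: $D$ is $1$-norming; each subspace $P_s^*(C(T)^*)$ is weak$^*$-closed, being the kernel of the weak$^*$-continuous operator $I-P_s^*$; and, since $\Gamma$ is $\sigma$-complete and $P_{s_n}^*(C(T)^*)\subset P_s^*(C(T)^*)$ whenever $s\geq s_n$, every countable subset of $D$ is contained in a single (weak$^*$-closed) subspace $P_s^*(C(T)^*)$. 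These last two properties take over the role played in Theorem \ref{1PlichkoValdivia} by the weak$^*$ countable closedness of a $\Sigma$-subspace.

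Next I would run the two-case argument of Theorem \ref{1PlichkoValdivia} verbatim to prove that $\delta_t\in D$ for every $t\in T$ lying on a successor level: if $\ims(t)$ is finite then $t$ is isolated and $1$-normingness gives $\delta_t\in D$; if $\ims(t)$ is infinite, choose $\{t_n\}_{n\in\omega_0}\subset\ims(t)$ and, using that $1_{V_{t_n}}\in C(T)$ and $D$ is $1$-norming, pick $\mu_n\in D\cap B_{C(T)^*}$ with $\mu_n(1_{V_{t_n}})=1$, so that $\supp(\mu_n)\subset V_{t_n}$; the estimate with $Z_{\varepsilon}(f,t)$ shows $\mu_n\to\delta_t$ in the weak$^*$ topology, and since $\{\mu_n\}_{n\in\omega_0}$ is countable it lies in one weak$^*$-closed $P_s^*(C(T)^*)\subset D$, hence so does $\delta_t$. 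Consequently $D\cap\delta(T)$ contains the Dirac measure at every successor-level point of $T$, and therefore is dense in $\delta(T)$.

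Finally, invoking \cite[Proposition 3.15]{Cuth1} in place of \cite[Theorem 5.2]{Kalenda2}: the compact space $(B_{C(T)^*},w^*)$ has a retractional skeleton whose associated set is $D\cap B_{C(T)^*}$, and since $\delta(T)$ is a closed subspace of $B_{C(T)^*}$ meeting this associated set in a dense subset of $\delta(T)$, the space $\delta(T)\cong T$ has a retractional skeleton. I expect the one step requiring genuine care — the only place where the non-commutative argument is not a mechanical copy of the commutative one — to be this passage to the closed subset $\delta(T)$: in the Valdivia case the trace of a $\Sigma$-subset on a closed subset is trivially again a $\Sigma$-subset, whereas here one has to extract from \cite[Proposition 3.15]{Cuth1} (or from the characterization of compacta with retractional skeletons via their associated subsets) that ``has a retractional skeleton'' is inherited by a closed subset on which the associated set is dense.
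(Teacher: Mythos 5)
Your proposal is correct and follows exactly the route the paper intends: the paper proves this theorem by the one-line remark that the argument of Theorem \ref{1PlichkoValdivia} carries over verbatim upon replacing the $\Sigma$-subspace by the subspace induced by the $1$-projectional skeleton and \cite[Theorem 5.2]{Kalenda2} by \cite[Proposition 3.15]{Cuth1}. You have merely made explicit the properties of the induced subspace (being $1$-norming and weak$^*$ countably closed) and the inheritance of retractional skeletons by closed subsets meeting the induced set densely, which is precisely what the paper leaves implicit.
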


Now we are going to investigate the space of continuous function on trees with height less than $\omega_1\cdot \omega_0$. It turns out that all such spaces are Plichko. 
 
\begin{thm}\label{PlichkoTree}
Let $T$ be a tree such that $\htte(T)<\omega_1\cdot\omega_0$, then $C(T)$ is a Plichko space.
\end{thm}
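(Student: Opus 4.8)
The plan is to prove the statement by transfinite induction on $\htte(T)$, using the decomposition $\omega_1\cdot\omega_0 = \sup_n \omega_1\cdot n$. The base cases $\htte(T)\le\omega_1+1$ follow from \cite[Theorem 4.1]{SomaTree} together with Theorem \ref{1PlichkoValdivia}: such trees are Valdivia, so $C(T)$ is $1$-Plichko, hence Plichko. For the inductive step I would fix $n\in\omega_0$ with $n\ge 1$ and assume $C(T')$ is Plichko for every tree $T'$ with $\htte(T')\le\omega_1\cdot n + 1$, aiming to prove it for trees of height at most $\omega_1\cdot(n+1)+1$. Here the key structural observation is that a tree of height $\le\omega_1\cdot(n+1)$ decomposes as $T_{\omega_1\cdot n}$ (of height $\le\omega_1\cdot n+1$, covered by induction) together with the family $\{V_p : p\in\Lev_{\omega_1\cdot n+1}(T)\}$ of clopen subtrees, each of which has height at most $\omega_1+1$ (measuring heights relative to $p$), hence is Valdivia by \cite[Theorem 4.1]{SomaTree}.

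The main technical work will be assembling a norming $\Sigma$-subspace of $C(T)^*$ out of the $\Sigma$-subspaces coming from the pieces. The natural approach: $C(T)$ can be realized, up to isomorphism, as a kind of "$c_0$-like" or "$C_0$-glued" combination of $C(T_{\omega_1\cdot n})$ and the spaces $C_0(V_p\setminus\{p\})$ for $p\in\Lev_{\omega_1\cdot n+1}(T)$, since a continuous function on $T$ restricted to each branch above level $\omega_1\cdot n+1$ must converge to its value at $p$, and by continuity only finitely many of these branches can carry values differing from $f(p)$ by more than $\varepsilon$. Concretely, I would use the retraction $r: T\to T_{\omega_1\cdot n+1}$ sending each point in $V_p$ to $p$; then $f\mapsto f\circ r$ splits $C(T)$ and one gets a decomposition whose dual is a $c_0$- or $\ell_1$-sum into which one can inject the product of $\Sigma$-subspaces. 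I would then invoke a stability result for Plichko spaces under such $c_0$-sums/$C_0$-type gluings — this is standard (Plichko spaces are stable under $c_0(\Gamma)$-sums and under the relevant direct-sum operations; see \cite{Kalenda2}) — to conclude.

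To handle limit heights at multiples of $\omega_1$, i.e. to pass from "all heights $\le\omega_1\cdot n+1$" to "heights $<\omega_1\cdot\omega_0$", note that any tree of height $<\omega_1\cdot\omega_0$ has height $\le\omega_1\cdot n+1$ for some finite $n$, so no genuine limit step over $n$ is required beyond the union of the finite induction. The only limit phenomena are at ordinals of the form $\omega_1\cdot k + \lambda$ with $\lambda<\omega_1$ limit, and these are already absorbed into the "$\le\omega_1\cdot n+1$" induction hypothesis via \cite[Theorem 4.1]{SomaTree} and the arguments of Theorem \ref{caratValdiviaGeneral}. So the real content is a single clean inductive step gluing one more "band" of height $\omega_1$ on top.

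The hard part will be making the gluing of $\Sigma$-subspaces precise: one must check that the candidate subspace $S\subset C(T)^*$ — measures $\mu$ whose restriction to $T_{\omega_1\cdot n+1}$ lies in the inductively-given $\Sigma$-subspace and whose "fiber parts" over the $V_p$ lie in the respective $\Sigma$-subspaces, with an appropriate countable-support condition over $p$ — is indeed of $\Sigma$-subspace form (i.e. produced by a single set $M\subset C(T)$ with $\overline{\spanne}(M)=C(T)$ and the countable-support description) and is norming. The countability bookkeeping across the uncountably many branches $V_p$, combined with the fact that each $f\in C(T)$ sees only countably many $p$'s with nonzero fiber-contribution (by continuity and compactness, as in the proof of Theorem \ref{1PlichkoValdivia}), is exactly what makes the $\Sigma$-condition survive. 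I expect this verification, rather than any new idea, to be the bulk of the proof, and I would lean on the countable-support arguments already used for $\delta_t$ in Theorem \ref{1PlichkoValdivia} and on Corollary \ref{Metrizabletree}/Proposition \ref{Radonseparablesupport} to control supports of measures.
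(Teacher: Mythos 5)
Your proposal follows essentially the same route as the paper: Proposition \ref{PropPlichkoTree} decomposes $C(T)$, via the retractions $r_i$ onto $T_{\omega_1\cdot i}$, as an isomorphic copy of $C(S_1)\oplus_{\infty}C_0(S_2\setminus S_1)\oplus_{\infty}\cdots\oplus_{\infty}C_0(S_n\setminus S_{n-1})$, identifies each band as a $c_0$-sum of spaces $C(U_t)$ with $U_t$ a tree of height at most $\omega_1+1$ (hence $1$-Plichko by Lemma \ref{RepresentationSigmaSub} and Proposition \ref{Radonseparablesupport}), and concludes by stability of $1$-Plichko spaces under $c_0$-sums and finite $\ell_\infty$-sums, exactly the gluing you describe. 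The only differences are organizational: the paper treats all $n$ bands in a single explicit isomorphism rather than an induction on $n$, and it additionally computes the exact norming constant $2n-1$ and an explicit description of the resulting $\Sigma$-subspace $\Lambda$, neither of which is needed for the bare statement of the theorem.
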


The previous theorem follows immediately from the next technical proposition, where, for every tree $T$ of height less than $\omega_1\cdot\omega_0$, a norming $\Sigma$-subspace of $C(T)^*$ is explicitly described.

\begin{prop}\label{PropPlichkoTree}
Let $T$ be a tree and suppose that $\htte(T)\leq (\omega_1\cdot n) + 1$, for some $n \geq 1$. Then 
\begin{equation*}
\Lambda=\{\mu\in C(T)^*: \forall j\in \{1,...,n\},\forall t\in \Lev_{\omega_1\cdot j}(T):\, \mu(V_t)=0\}
\end{equation*}
is a $(2n-1)$-norming $\Sigma$-subspace of $C(T)^*$. If $\htte(T)>(\omega_1\cdot(n-1))+1$, then the norming constant is exactly $2n-1$.
\end{prop}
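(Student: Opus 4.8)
The plan is to produce the right generating set and then verify, separately, the $\Sigma$‑subspace identity and the two‑sided estimate on the norming constant.

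\emph{Generating set and reduction.} I would take
\[
M=\{\,1_{V_t}\ :\ t\in T,\ \htte(t,T)=0\text{ or }\htte(t,T)\text{ a successor ordinal}\,\}.
\]
Since $T$ is zero‑dimensional, step functions subordinated to clopen partitions are uniformly dense in $C(T)$; and because $\ims(t)$ always lies on a successor level, every basic clopen set satisfies $1_{W_{s}^{F}}=1_{V_s}-\sum_{r\in F}1_{V_r}\in\spanne(M)$, so indicators of arbitrary clopen sets, hence all step functions, lie in $\spanne(M)$, and $\overline{\spanne}(M)=C(T)$. Writing $S_M$ for the set of $\mu\in C(T)^{*}$ with $\{t:1_{V_t}\in M,\ \mu(V_t)\neq0\}$ countable, and noting that $\Lambda$ is exactly the set of $\mu$ with $\mu(V_t)=0$ for every $t$ of cofinality $\omega_1$, the statement reduces to (a) $S_M=\Lambda$ and (b) the inequality $\|f\|\le(2n-1)\sup\{|\mu(f)|:\mu\in\Lambda\cap B_{C(T)^{*}}\}$ for all $f$, together with an explicit witness showing $2n-1$ cannot be lowered when $\htte(T)>\omega_1\cdot(n-1)+1$.

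\emph{Proof that $S_M=\Lambda$.} For $S_M\subseteq\Lambda$: if $\mu\in S_M$ and $\cf(t)=\omega_1$, the successor‑level predecessors of $t$ form a chain cofinal in $\hat t$ of uncountable cofinality, and only countably many of them $s$ have $\mu(V_s)\neq0$, hence they are bounded in $\hat t$; as $(V_s)_{s\uparrow t}$ is then a decreasing net of compact sets with intersection $V_t$ along which $\mu$ is eventually $0$, continuity of $\mu=\mu^{+}-\mu^{-}$ on such nets gives $\mu(V_t)=0$. For $\Lambda\subseteq S_M$ I would induct on $n$. In the base case $\htte(T)\le\omega_1+1$: decomposing $\mu=\mu_a+\mu_c$, a point of cofinality $\omega_1$ cannot be an atom (there $V_x=\{x\}$, so $\mu(V_x)=\mu(\{x\})=0$), so every atom sits on a level $<\omega_1$ and contributes only countably many successor $t$ with $\mu_a(V_t)\neq0$; and $\supp\mu_c\subseteq T_\beta$ for some $\beta<\omega_1$ by Corollary~\ref{supportomisure}, so, since the $V_t$ on a fixed level are pairwise disjoint (hence only countably many carry mass), $\mu_c(V_t)\neq0$ for at most countably many successor $t$. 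For the inductive step let $L=\Lev_{\omega_1\cdot(n-1)}(T)$; disjointness of $\{V_b:b\in L\}$ leaves only countably many $b$ with $\mu|_{V_b}\neq0$, and for each such $b$ the tree $V_b$ has height $\le\omega_1+1$ and $\mu|_{V_b}$ lies in the corresponding $\Lambda$, so the base case handles $\{t\ge b:\htte(t,T)\text{ successor},\ \mu(V_t)\neq0\}$. On the other hand, for $\htte(t,T)<\omega_1\cdot(n-1)$ one has $V_t\cap T_{\ge\omega_1\cdot(n-1)}=\bigsqcup\{V_b:b\in L\cap V_t\}$ and $\mu(V_b)=0$ for all these $b$ (as $\cf(b)=\omega_1$), so $\mu(V_t)$ equals the value at $V_t\cap T_{\omega_1\cdot(n-1)}$ of the Radon measure on $T_{\omega_1\cdot(n-1)}$ obtained from $\mu$ by restricting to the initial segment and subtracting its mass on $L$; that measure lies in $\Lambda(T_{\omega_1\cdot(n-1)})$, to which the inductive hypothesis applies. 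Combining the two ranges of $t$ gives $\mu\in S_M$.

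\emph{The bound $\le2n-1$, and sharpness.} Fix $f$ with $\|f\|=1$; for $j=1,\dots,n$ let $s_j=\sup\{|f(t)|:\htte(t,T)\in[\omega_1\cdot(j-1),\omega_1\cdot j)\}$. Continuity forces $|f|$ at every point of cofinality $\omega_1$ (and at level $\omega_1\cdot n$) to be dominated by the $s_j$ of the slab below, so $\|f\|=\max_j s_j$. Now any $r$ with $\htte(r,T)<\omega_1$ has no ancestor of cofinality $\omega_1$, so $\pm\delta_r\in\Lambda$, whence $\sigma:=\sup\{|\mu(f)|:\mu\in\Lambda\cap B_{C(T)^{*}}\}\ge s_1$; and for $1\le j\le n-1$, if $t'$ realizes $s_{j+1}$ on slab $j+1$ and $a<t'$ is its ancestor on level $\omega_1\cdot j$, then $\tfrac12(\delta_{t'}-\delta_a)\in\Lambda$ (the difference kills $V_a$ and every $V_s$ with $s<a$, $\cf(s)=\omega_1$) has norm $1$, so $\sigma\ge\tfrac12|f(t')-f(a)|\ge\tfrac12(s_{j+1}-s_j)$. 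Thus $s_1\le\sigma$ and $s_{j+1}\le s_j+2\sigma$, giving $s_j\le(2j-1)\sigma$ and $\|f\|\le(2n-1)\sigma$. For optimality when $\htte(T)>\omega_1\cdot(n-1)+1$, pick $z$ on level $\omega_1\cdot(n-1)+1$ and, along $\hat z$, the predecessors $d_k$ on level $\omega_1\cdot k$ and $c_k$ on level $\omega_1\cdot k+1$ ($k=1,\dots,n-1$, so $c_{n-1}=z$), and set $f=\tfrac1{2n-1}\bigl(1+2\sum_{k=1}^{n-1}1_{V_{c_k}}\bigr)$: a continuous step function with $\|f\|=1$, constant on the $2n-1$ pieces $E_i$ of the partition of $T$ cut out by $V_{d_1}\supseteq V_{c_1}\supseteq V_{d_2}\supseteq\dots\supseteq V_{d_{n-1}}\supseteq V_{c_{n-1}}$. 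For $\mu\in\Lambda$ with $\|\mu\|\le1$ the relations $\mu(V_{d_k})=0$ give $\mu(E_{2k-1})=-\mu(E_{2k})$; substituting into $\mu(f)=\sum_i v_i\mu(E_i)$ collapses it to $\tfrac1{2n-1}\bigl(\mu(E_0)+2\sum_k\mu(E_{2k})\bigr)$, and the same relations force $|\mu(E_0)|+2\sum_k|\mu(E_{2k})|\le\sum_i|\mu|(E_i)=\|\mu\|$, so $|\mu(f)|\le\tfrac1{2n-1}$.

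I expect the genuinely delicate point to be making the inductive step of ``$\Lambda\subseteq S_M$'' fully rigorous — in particular the manipulation of Radon measures on the non‑closed initial segment $T_{<\omega_1\cdot(n-1)}$ and the bookkeeping that isolates exactly $\omega_0$ many active successor levels; the $\le2n-1$ bound and its sharpness are, once set up as above, essentially computations using only the facts recorded in Section~2 (density of step functions, disjointness of the $V_t$ on a level, Proposition~\ref{Radonseparablesupport} and Corollary~\ref{supportomisure}).
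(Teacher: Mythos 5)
Your route is genuinely different from the paper's. The paper never works with a generating set directly: it cuts $T$ into the slabs $S_i=T_{\omega_1\cdot i}$, builds an explicit isomorphism $G$ of $C(T)$ onto $C(S_1)\oplus_\infty C_0(S_2\setminus S_1)\oplus_\infty\cdots\oplus_\infty C_0(S_n\setminus S_{n-1})$, observes that each summand is $1$-Plichko by Lemma \ref{RepresentationSigmaSub}, and then computes $G^*(\Sigma)$ to identify it with $\Lambda$; the norming constant is estimated afterwards exactly in your spirit (telescoping differences of Dirac measures along $\hat t$ for the upper bound, and essentially your same step function for sharpness -- your $f$ and the paper's coincide up to notation). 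Your direct exhibition of $M=\{1_{V_t}:\ t \text{ minimal or on a successor level}\}$ together with the inductive verification of $S_M=\Lambda$ avoids all the bookkeeping with $G$ and $(G^*)^{-1}$, and the estimate $s_1\le\sigma$, $s_{j+1}\le s_j+2\sigma$ is a clean packaging of the upper bound. Note also that you (correctly) read $\Lambda$ as requiring $\mu(V_t)=0$ on all levels $\omega_1\cdot j$ with $j\le n$ rather than $j\le n-1$: the literal statement is off by one (for $n=1$ it would make $\Lambda$ all of $C(T)^*$, which is not a $\Sigma$-subspace once $\Lev_{\omega_1}(T)\neq\emptyset$), and the paper's own computation of $G^*(\Sigma)$ likewise runs $j$ up to $n$.

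The one step that does not yet close is the reduction $\mu(V_t)=\mu(V_t\cap T_{<\omega_1\cdot(n-1)})$ in the inductive step of $\Lambda\subseteq S_M$. You write $V_t\setminus T_{<\omega_1\cdot(n-1)}=\bigsqcup_{b\in L\cap V_t}V_b$ and invoke $\mu(V_b)=0$; but $L\cap V_t$ may be uncountable, and for $b$ on a limit level $V_b$ is closed, not open, so the union of the $|\mu|$-null sets $V_b$ is not automatically $|\mu|$-null -- the fact that the complement of $\supp(\mu)$ is null only disposes of unions of null \emph{open} sets. What saves the argument is: (1) if $|\mu|(V_b)=0$ then each clopen $V_c$ with $c\in\ims(b)$ is null, hence misses $\supp(\mu)$, so $V_b\cap\supp(\mu)\subseteq\{b\}$; and (2) $L\cap\supp(\mu)$ is finite, because $\supp(\mu)$ is compact metrizable by Proposition \ref{Radonseparablesupport} while a level of cofinality $\omega_1$ admits no nontrivial convergent sequence (if $b_k\to b$ inside such a level, then $\sup_k\htte(b_k\wedge b,T)<\htte(b,T)$ and any $V_s$ with $s<b$ on a successor level above that height separates $b$ from every $b_k$). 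Combining (1) and (2) with the fact that the non-atoms among the $b$'s carry no mass gives $|\mu|\bigl(\bigcup\{V_b:\ b\in L\cap V_t,\ |\mu|(V_b)=0\}\bigr)=0$, after which your induction goes through. You flag this as the delicate point, but the finiteness of $L\cap\supp(\mu)$ is the missing lemma you actually need, and it should be stated and proved; the remaining parts (the density of $\spanne(M)$, the inclusion $S_M\subseteq\Lambda$ via continuity of $\mu^{\pm}$ on decreasing nets of compacta, the two norming estimates) are correct as written.
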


\begin{lem}\label{RepresentationSigmaSub}
Let $T$ be a tree such that $\htte(T)\leq \omega_1 +1$ and $D=\{t\in T: \cf(t)\leq\omega_0\}$. Then the set
\begin{equation*}
S=\{\mu\in C(T)^*:\supp(\mu)\subset D\}
\end{equation*}
is a $1$-norming $\Sigma$-subspace of $C(T)^*$.
\end{lem}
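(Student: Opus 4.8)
Here is the plan. The idea is to realize $S$ by an explicit linearly dense set $M\subset C(T)$ and then to check the norming constant separately. If $\htte(T)\le\omega_1$ then every node has cofinality $\le\omega_0$, so $D=T$, $S=C(T)^*$ and there is nothing to prove (take $M=C(T)$); so I assume $\htte(T)=\omega_1+1$, whence $T\setminus D=\Lev_{\omega_1}(T)$, and all these nodes are maximal. I would take
\[
M=\{\,1_{V_t}:t\in T,\ \cf(t)<\omega_0\,\}.
\]
Each such $V_t$ is clopen, so $M\subset C(T)$. Moreover $\spanne(M)$ is a subalgebra of $C(T)$, since $1_{V_s}\cdot 1_{V_t}=1_{V_s\cap V_t}$ and in a tree $V_s\cap V_t$ is either empty or equal to $V_{\max\{s,t\}}$ (with $\cf(\max\{s,t\})<\omega_0$); it contains the constant $1=1_{V_{\mathrm{root}}}$, and it separates the points of $T$ because $\Clop(T)$ is generated by $\{V_t:\cf(t)<\omega_0\}$. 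By the Stone--Weierstrass theorem $\overline{\spanne}(M)=C(T)$. For the norming property, $D$ is dense in $T$ (every nonempty basic open set contains the root or a successor-level node, all of which lie in $D$); since $\delta_d\in S\cap B_{C(T)^*}$ for each $d\in D$, continuity of $f$ gives $\sup\{|\mu(f)|:\mu\in S\cap B_{C(T)^*}\}\ge\sup_{d\in D}|f(d)|=\|f\|$, so $S$ is $1$-norming.

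It then remains to check that $S=\{\mu\in C(T)^*:\{m\in M:\mu(m)\neq0\}\text{ is countable}\}$, i.e. that for $\mu\in C(T)^*$ one has $\supp\mu\subset D$ if and only if $\Sigma_M(\mu):=\{t:\cf(t)<\omega_0,\ \mu(V_t)\neq0\}$ is countable. For the forward implication put $S_0=\supp\mu$, metrizable by Proposition \ref{Radonseparablesupport}; I would rerun the counting argument from that proof to see that $\mathcal G=\{V_t\cap S_0:\cf(t)<\omega_0\}$ is countable (it generates $\Clop(S_0)$, which is $\sigma$-centred because $|\mu|$ is strictly positive on it, and each centred subfamily of $\mathcal G$ is a chain, an uncountable such chain producing uncountably many pairwise disjoint nonempty open subsets of $S_0$, impossible for a finite measure). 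Since $|\mu|$ is carried by $S_0$ we have $\mu(V_t)=\mu(V_t\cap S_0)$, so
\[
\Sigma_M(\mu)=\bigcup\bigl\{\{t:\cf(t)<\omega_0,\ V_t\cap S_0=A\}:A\in\mathcal G,\ \mu(A)\neq0\bigr\};
\]
this is a countable union, and each of its members is contained in $\hat s$ for any $s\in A$, which is countable because $s\in S_0\subset D$ forces $\htte(s,T)<\omega_1$. Hence $\Sigma_M(\mu)$ is countable.

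The converse is the step I expect to be the main obstacle, and I would argue it contrapositively. Suppose $\supp\mu\not\subset D$ and fix $x\in\supp\mu\cap\Lev_{\omega_1}(T)$. The crucial point is that such an $x$ is automatically an atom of $\mu$: since $\supp\mu$ is metrizable, the nbhd base $\{V_s\cap\supp\mu:s<x\text{ on a successor level}\}$ of $x$ in $\supp\mu$ can be thinned to a countable subfamily that is still a nbhd base; letting $s'<x$ be a successor-level node above all the (countably many) nodes $s$ occurring in it, one gets $V_{s'}\subset V_s$ for all those $s$, and since $\bigcap_{s<x}V_s=\{x\}$ (by chain-completeness and the maximality of $x$) this forces $V_{s'}\cap\supp\mu=\{x\}$; as $V_{s'}$ is a neighbourhood of $x$, $0<|\mu|(V_{s'})=|\mu|(\{x\})$. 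Consequently $|\mu|(V_s\setminus\{x\})=|\mu|(V_s)-|\mu|(\{x\})\to0$ as $s\uparrow x$ along successor-level nodes, whence $\mu(V_s)=\mu(\{x\})+\mu(V_s\setminus\{x\})\to\mu(\{x\})\neq0$; so $\mu(V_s)\neq0$ for every $s$ in a final segment of the chain $\{s:s<x\}$, which contains $\omega_1$-many successor-level nodes. Therefore $\Sigma_M(\mu)$ is uncountable, and the equivalence — hence the lemma — follows.
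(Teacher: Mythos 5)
Your argument is correct in substance, but it takes a genuinely different route from the paper. The paper's proof is a three-line deduction from external results: by \cite[Theorem 4.1]{SomaTree} the tree $T$ is Valdivia with $D$ as a dense $\Sigma$-subset, by \cite[Proposition 5.1]{Kalenda2} the measures with \emph{separable} support contained in $D$ form a $1$-norming $\Sigma$-subspace, and Proposition \ref{Radonseparablesupport} then removes the separability restriction since every Radon measure on a tree automatically has metrizable support. You instead exhibit the witnessing set $M=\{1_{V_t}:\cf(t)<\omega_0\}$ explicitly and verify by hand both the norming inequality and the equivalence $\supp\mu\subset D\iff\{t:\cf(t)<\omega_0,\ \mu(V_t)\neq 0\}$ countable; the forward implication reuses the $\sigma$-centredness counting from Proposition \ref{Radonseparablesupport}, and the converse correctly exploits that a point of $\Lev_{\omega_1}(T)$ in a metrizable support must be isolated there, hence an atom, which forces $\mu(V_s)=\mu(\{x\})\neq 0$ on an uncountable final segment of $\hat{x}$. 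What your approach buys is a self-contained, concrete description of the $\Sigma$-structure (useful, e.g., for seeing why $\Lambda$ in Proposition \ref{PropPlichkoTree} has the form it does); what it costs is length, and it silently re-proves a special case of Kalenda's general result.

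One slip to fix: in the degenerate case $\htte(T)\le\omega_1$ you write that $S=C(T)^*$ and ``there is nothing to prove (take $M=C(T)$).'' That choice of $M$ does not work: for $M=C(T)$ and any nonzero $\mu$ the set $\{m\in M:\mu(m)\neq 0\}$ is uncountable (it contains all nonzero scalar multiples of any $m_0$ with $\mu(m_0)\neq0$), so the resulting ``$\Sigma$-subspace'' would be $\{0\}$. The claim that the whole dual is a $\Sigma$-subspace of itself is exactly the nontrivial assertion that $C(T)$ is weakly Lindel\"of determined and does require proof. Fortunately your own $M=\{1_{V_t}:\cf(t)<\omega_0\}$ handles this case too, since the forward implication of your equivalence applies to every $\mu$ when $D=T$; just delete the special case and run the general argument uniformly.
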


\begin{proof}
Since $\htte(T)\leq\omega_1+1$, by \cite[Theorem 4.1]{SomaTree}, $T$ is a Valdivia compact space and $D$ is a dense $\Sigma$-subspace. Hence, by \cite[Proposition 5.1]{Kalenda2} we have that
\begin{equation*}
S=\{\mu\in C(T)^*:\supp(\mu) \mbox{ is a separable subset of } D\}
\end{equation*}
is a $1$-norming $\Sigma$-subspace of $C(T)^*$. Finally the assertion follows by Proposition \ref{Radonseparablesupport}.
\end{proof}

\begin{proof}[Proof of Proposition \ref{PropPlichkoTree}]
If $n=1$ the assertion follows from Lemma \ref{RepresentationSigmaSub}, hence we assume that $n\geq 2$ and that $T$ is a tree with $\omega_1\cdot (n-1)+1<\htte(T)\leq (\omega_1\cdot n) +1$. As in Lemma \ref{RepresentationSigmaSub} we define $D=\{t\in T: \cf(t)\leq \omega_0\}$. Let $S_0=\emptyset$, $S_i=T_{\omega_1\cdot i}$ for each $i\leq n-1$ and $S_{n}=T$. Then we obtain the following:
\begin{itemize}
\item $S_i$ is a closed subset of $T$ for every $i\in\{1,...,n\}$;
\item $S_1$ is isomorphic to a tree of height $\omega_1 +1$, hence, by Lemma \ref{RepresentationSigmaSub}, $C(S_1)$ is a $1$-Plichko space with $\Sigma_1=\{\mu\in C(S_1)^*:\supp(\mu)\subset D\cap S_1\}$ as $\Sigma$-subspace;
\item for every $i\in\{1,...,n-1\}$, the subset $S_{i+1}\setminus S_i$ is a locally compact space and $C_0(S_{i+1}\setminus S_i)$ is a $1$-Plichko space. Indeed, let $t\in \Lev_{(\omega_1\cdot i)+1}(T)$ and $U_{t}=V_{t}\cap S_{i+1}$. It is clear that $U_t$ is a closed subset of $T$ and further it is isomorphic to a tree of height less or equal than $\omega_1 +1$. Hence, by Lemma \ref{RepresentationSigmaSub}, we obtain that $U_t$ is a Valdivia compact space and $C(U_t)$ is a $1$-Plichko space with $\Sigma_{i,t}=\{\mu\in C(U_t)^*:\supp(\mu)\subset D\cap U_t\}$ as $\Sigma$-subspace. Moreover we observe that $S_{i+1}\setminus S_i$ is the topological sum of all $U_t$, therefore $C_{0}(S_{i+1}\setminus S_i)$ is the $c_0$-sum of $C(U_t)$ and its dual is the $\ell_1$-sum of $C(U_t)^*$. Hence, by \cite[Theorem 4.31 and Lemma 4.34]{Kalenda2} we obtain that $C_0(S_{i+1}\setminus S_i)$ is a $1$-Plichko space and 
\begin{equation*}
\begin{split}
\Sigma_{i}=&\{(\mu_t)_{t\in \Lev_{(\omega_1\cdot i)+1}(T)}\in C_0(S_{i+1}\setminus S_i)^*:(\forall t\in  \Lev_{(\omega_1\cdot i)+1}(T))(\mu_t\in \Sigma_{i,t})\, \& \\
&\{t\in \Lev_{(\omega_1\cdot i)+1}(T):\mu_t\neq 0\} \mbox{ is countable}\}
 \end{split}
\end{equation*}
 is its $\Sigma$-subspace.
\end{itemize}

Suppose that $i\leq n-1$ and let $r_{i}:T\to T$ be the continuous retraction defined by:
\begin{equation*}
r_i(t)=
\begin{cases}
t&\mbox{  if } t\in S_i,\\
s&\mbox{ if } s\leq t \mbox{ and } s\in \Lev_{\omega_1\cdot i}(T).
\end{cases}
\end{equation*}
For simplicity we define $r_{n}:T\to T$ as the identity map. These continuous retractions induce continuous linear projections on $C(T)$ and such projections are defined by $P_i(f)=f\circ r_i$. Then for every $f\in C(T)$ and every $i\leq n-1$ the following conditions hold:
\begin{itemize} 
\item $f\upharpoonright_{S_i}=P_i f\upharpoonright_{S_i}$;
\item $(f-P_i f)\upharpoonright_{S_{i+1}}=(P_{i+1}f-P_{i}f)\upharpoonright_{S_{i+1}}$; 
\item $(f-P_i f)\upharpoonright_{S_{i+1}\setminus S_i}\in C_0(S_{i+1}\setminus S_i)$.
\end{itemize}

In order to get an isomorphism between $C(T)$ and a $1$-Plichko space we are going to define the following map:
\begin{equation*}
\begin{split}
G:C(&T)\to C(S_1)\oplus_{\infty}C_0(S_{2}\setminus S_1)\oplus_{\infty}...\oplus_{\infty}C_0(S_{n}\setminus S_{n-1})\\
&f\mapsto(f\upharpoonright_{S_1},(f-P_1 f)\upharpoonright_{S_2\setminus S_1},...,(f-P_{n-1} f)\upharpoonright_{S_{n}\setminus S_{n-1}}).
\end{split}
\end{equation*}
This easily implies that the norm of $G$ is at most $2$. Now we are going to define the inverse of the mapping $G$. For simplicity we denote by \begin{equation*}
W=C(S_1)\oplus_{\infty}C_0(S_{2}\setminus S_1)\oplus_{\infty}...\oplus_{\infty}C_0(S_{n}\setminus S_{n-1}).
\end{equation*}
Let $(f_1,f_2,...,f_{n})\in W$. We define its preimage $f\in C(T)$ as follows:
\begin{equation*}
f(t)=
\begin{cases}
f_1(t)&\mbox{  if } t\in S_1,\\
f_{i+1}(t) + \sum_{j=1}^{i}f_j(r_j(t))&\mbox{ if } t\in S_{i+1}\setminus S_i.
\end{cases}
\end{equation*}
It follows that the norm of the inverse of $G$ is at most $n$. Therefore $G$ is an isomorphism. Since each component of $W$ is a $1$-Plichko space, we have that $W$ is $1$-Plichko, therefore $C(T)$ is a Plichko space. Moreover, the subspace $\Sigma=\{(\mu_i)_{i=1}^{n}\in W^*:\mu_i\in \Sigma_i\}\subset W^*$ is a $1$-norming $\Sigma$-subspace of $W$. In order to compute the exact value of the norming constant of the $\Sigma$-subspace $G^{*}(\Sigma)$, we are going to describe the adjoint map of $G$:
\begin{equation*}
\begin{split}
G^*(\mu_1,\dots,\mu_{n})(f)&=(\mu_1,\dots,\mu_{n})(Gf)=
\mu_1(f\upharpoonright_{S_1}) +\sum_{j=1}^{n-1} \mu_{j+1}((f-P_jf)\upharpoonright_{S_{j+1}\setminus S_j})\\
&=\int_{S_1}f d\mu_1 + \sum_{j=1}^{n-1}\int_{S_{j+1}\setminus S_j}(f-P_jf)d \mu_{j+1}\\
&=\int_{T}f d(\sum_{i}^{n}\mu_i) - \sum_{j=1}^{n-1}\int_{T}f d r_j(\mu_{j+1}),\\
\end{split}
\end{equation*}
Hence 
\begin{equation*}
G^*(\mu_1,\dots,\mu_{n})=
\sum_{i=1}^{n}\mu_i -\sum_{j=1}^{n-1}r_j(\mu_{j+1}).
\end{equation*}
Where $r_i(\mu_j)(A)=\mu_j(r_{i}^{-1}(A))$ for every measurable subset $A$ of $T$.  Now we are going to give a representation of the inverse of $G^*$. Let $\mu=G^*(\mu_1,...,\mu_n)=\sum_{i=1}^{n}\mu_i -\sum_{j=1}^{n-1}r_j(\mu_{j+1})$ and $k\leq n-1$, then
\begin{equation*}
r_{k}(\mu)=\sum_{i=1}^{n}r_k(\mu_i) -\sum_{j=1}^{n-1}r_k(r_j(\mu_{j+1})).
\end{equation*}
Further we observe that $r_k(\mu_i)=\mu_i$ for $i\leq k$ and
\begin{equation*}
r_k(r_j(\mu_{j+1}))=
\begin{cases}
r_j(\mu_{j+1}), &\mbox{ if } j< k,\\
r_k(\mu_{j+1}), &\mbox{ if } j\geq k.
\end{cases}
\end{equation*}
Hence we obtain
\begin{equation*}
\begin{split}
r_{k}(\mu)&=\sum_{i=1}^k\mu_i+\sum_{i=k+1}^n r_k(\mu_i)-\sum_{j=1}^{k-1}r_j(\mu_{j+1})-\sum_{j=k}^{n-1}r_k(\mu_{j+1})\\
&=\sum_{i=1}^{k}\mu_i -\sum_{j=1}^{k-1}r_j(\mu_{j+1}).
\end{split}
\end{equation*}
Now we take the restriction of $\mu$ to $S_{i}\setminus S_{i-1}$:
\begin{equation*}
\begin{split}
&\mu\upharpoonright_{S_1}=\mu_1 -r_1(\mu_2),\\
&\mu\upharpoonright_{S_i\setminus S_{i-1}}=\mu_i-r_{i}(\mu_{i+1}), \mbox{ for }i\in \{2,...,n-1\},\\
&\mu\upharpoonright_{T\setminus S_{n-1}}=\mu_n.
\end{split}
\end{equation*}
Hence, combining these two formulae we obtain
\begin{equation*}
\begin{split}
&\mu_1=r_1(\mu),\\
&\mu_i= r_i(\mu) -\mu\upharpoonright_{S_{i-1}}, \mbox{ for }i\in\{2,...,n-1\},\\
&\mu_{n}=\mu\upharpoonright_{T\setminus S_{n-1}}.
\end{split}
\end{equation*}
Therefore  the inverse of $G^*$ can be represented as
\begin{equation*}
\mu\mapsto (r_1(\mu),r_2(\mu)-\mu\upharpoonright_{S_1},\dots,r_{n-1}(\mu)-\mu\upharpoonright_{S_{n-2}},\mu\upharpoonright_{T\setminus S_{n-1}}).
\end{equation*}
Hence we have the following:
\begin{equation*}
\begin{split}
G^*(\Sigma)&=\{\mu\in C(T)^*:(r_1(\mu),r_2(\mu)-\mu\upharpoonright_{S_1},\dots,r_{n-1}(\mu)-\mu\upharpoonright_{S_{n-2}},\mu\upharpoonright_{T\setminus S_{n-1}})\in\Sigma\}\\
&=\{\mu\in C(T)^*: \forall j\in\{1,\dots,n\}, \forall B\subset \Lev_{\omega_1\cdot j}(T): \, \mu(\bigcup_{t\in B} V_t)=0\}\\
&=\{\mu\in C(T)^*: \forall j\in\{1,\dots,n\}, \forall t\in \Lev_{\omega_1\cdot j}(T): \, \mu(V_t)=0\}.
\end{split}
\end{equation*}
Indeed, the first equality is obvious. Let us prove the second one:
\begin{itemize}
\item[$\subset$]: Let $\mu\in G^*(\Sigma)$ and $B\subset \Lev_{\omega_1\cdot j}(T)$ for some $j\in \{1,...,n-1\}$. Then, since $(r_{j}(\mu)-\mu\upharpoonright_{S_{j-1}})\in \Sigma_j$ we have $(r_j(\mu)-\mu\upharpoonright_{S_{j-1}})(B)=0$. Hence $0=(r_j(\mu)-\mu\upharpoonright_{S_{j-1}})(B)=\mu(\bigcup_{t\in B}V_t)-\mu(B\cap S_{j-1})=\mu(\bigcup_{t\in B}V_t)$. If $j=n$ we have $\mu\upharpoonright_{T\setminus S_{n-1}}\in \Sigma_n$, hence $0=\mu\upharpoonright_{T\setminus S_{n-1}}(B)=\mu(B)$.
\item[$\supset$]: Let $\mu\in C(T)^{*}$ such that for each $j\in\{1,..,n\}$ and each $B\subset \Lev_{\omega_1\cdot j}(T)$, $\mu(\bigcup_{t\in B} V_t)=0$ holds. We observe that, by Proposition \ref{Radonseparablesupport}, the support of the measure $r_{j}(\mu)-\mu\upharpoonright_{S_{j-1}}$ is a metrizable subset of $T$ for each $j\in\{2,...,n-1\}$. Hence $(r_{j}(\mu)-\mu\upharpoonright_{S_{j-1}})(V_t)=0$ for all but countably many $t\in \Lev_{(\omega_1\cdot j)+1}(T)$. Suppose that $j\in\{2,...,n-1\}$ and let $s$ be an element of $T$ on a successor level such that $\omega_1\cdot (j-1)<\htte(s,T)\leq \omega_1\cdot (j+1)$, then we have: 
\begin{equation*}
\begin{split}
(r_j(\mu)-\mu\upharpoonright_{S_{j-1}})(V_s)&=r_j(\mu)(V_s)=\mu(V_s)=\mu(V_s\cap T_{<\omega_1\cdot j})+\mu((V_s\cap (T\setminus T_{<\omega_1\cdot j}))\\
&=\mu(V_s\cap T_{<\omega_1\cdot j}).
\end{split}
\end{equation*}
In particular, for each $s\in \Lev_{(\omega_1\cdot (j-1))+1}(T)$, we have $(r_j(\mu)-\mu\upharpoonright_{S_{j-1}})(V_s)=\mu(V_s\cap T_{<\omega_1\cdot j})$, hence $(r_j(\mu)-\mu\upharpoonright_{S_{j-1}})\upharpoonright_{V_s}\subset V_s\cap D$. Therefore we obtain $(r_j(\mu)-\mu\upharpoonright_{S_{j-1}})\in\Sigma_
j$ for each $j\in\{2,...,n-1\}$. Using a similar argument we obtain $r_{1}(\mu)\in\Sigma_1$ and $\mu\upharpoonright_{T\setminus S_{n-1}}\in \Sigma_n$.
\end{itemize}
Let us prove the last one:
\begin{itemize}
\item[$\subset$]: is trivial.
\item[$\supset$]: Let $\mu\in C(T)^*$ such that for any $j\in \{1,...,n\}$ and $t\in\Lev_{\omega_1\cdot j}(T)$, $\mu(V_t)=0$ holds. Fix $j\in \{1,...,n\}$. Let us define the continuous part of $\mu$ by $\mu_c$, by Corollary \ref{supportomisure}, $\supp(\mu_c\upharpoonright S_j)$ is contained in $T_{\alpha}$, where $\alpha<\omega_1\cdot j$. Hence we may suppose that $\mu(V_t\cap S_j)=0$ whenever $\htte(t,T)>\alpha$. Hence for each relatively open subset $A\in\Lev_{\omega_1\cdot j}(T)$ we have $\mu(A)=0$, therefore, by the regularity of $\mu$, we have the same conclusion for each subset of $\Lev_{\omega_1\cdot j}(T)$.  
\end{itemize}
Hence $\Lambda=G^*(\Sigma)$ is a $\Sigma$-subspace of $C(T)^*$. Now are going to show that the norming constant of $\Lambda$ is equal to $2n-1$. At first we observe that if $f\in C(T)$ and $t\in\Lev_{\alpha}(T)$ with $\cf(t)=\omega_1$, then there exists $s\leq t$ on a successor level, such that $f$ is constant on $V_s\cap T_{\alpha}$. Indeed, since $f$ is a continuous function on $T_{\alpha}$, for each $n\in\omega_0$ there exists $t_n< t$ on a successor level such that $|f(t)-f(s)|<1/n$, for each $s\in V_{t_n}\cap T_{\alpha}$. Then we have $f(s)=f(t)$ for each $s\in V_{t_0}\cap T_{\alpha}$, where $t_0=(\sup_{n\in\omega_0}t_n)+1$.\\
Now, let $f\in C(T)$, without loss of generality, we may suppose $\|f\|=1$. Let $t\in T$ such that $|f(t)|=1$, then there exists $i\in\{0,...,n-1\}$ such that $\omega_1\cdot i\leq \htte(t,T)<\omega_1\cdot (i+1)$. We will show that there exists a measure $\mu\in \Lambda$ satisfying $\|\mu\|\leq 2n-1$ and $\mu(f)=1$, for this purpose let us consider $\mu\in \Lambda$ defined by:
\begin{equation*}
\mu=(\sum_{k=1}^{i}\delta_{t_k}-\delta_{s_{k}}) +\delta_{t_0},
\end{equation*}
where $t_i=t$, $\{s_{k}\}=\hat{t}_i\cap \Lev_{\omega_1\cdot k}(T)$ and $t_k$ is such that $s_{k}<t_k<s_{k+1}$ ($t_0<s_1$) and $f(s_{k+1})=f(t_k)$ for $k\geq 1$ (for $k=0$, respectively). Such elements exist since $f$ is constant near points of uncountable cofinality. Therefore we obtain $2n-1\geq 2i+1=\|\mu\|$ and easily we get $|\mu(f)|=1$. Hence the norming constant of $\Lambda$ is at most $2n-1$. On the other hand, suppose that $\htte(T)>(\omega_1\cdot (n-1))+1$ and let $t_{n-1}\in T$ such that $\htte(t_{n-1},T)>(\omega_1\cdot (n-1))+1$ and 
\begin{equation*}
\begin{split}
&\{s_i\}=\hat{t}_{n-1}\cap \Lev_{\omega_1\cdot i}(T) \mbox{ for }i=1,...,n-1,\\
&\{t_{i}\}=\hat{t}_{n-1}\cap \Lev_{(\omega_1\cdot i)+1}(T)\mbox{ for }i=1,...,n-2.
\end{split}
\end{equation*}
Let us consider the following continuous map:
\begin{equation*}
f(t)=1_{V_{t_{n-1}}}(t)+ \delta_1 1_{T\setminus V_{t_1}}(t) +\sum_{i=1}^{n-2}\delta_{i+1}1_{V_{t_i}\setminus V_{t_{i+1}}}(t),
\end{equation*}
where $\delta_1=1/(2n-1)$ and $\delta_i=(2i-1) \delta_1$ for $i=1,..,n-1$. Let $\mu\in \Lambda$ such that $\mu(f)=1$. We put
\begin{equation*}
\begin{split}
&\mu(T\setminus V_{s_1})=a_0,\\
&\mu(V_{s_i}\setminus V_{t_i})=b_i \mbox{  for } i=1,...,n-1,\\
&\mu(V_{t_i}\setminus V_{s_{i+1}})=a_i \mbox{  for } i=1,...,n-2,\\
&\mu(V_{t_{n-1}})=a_{n-1}
\end{split}
\end{equation*}
Since $\mu\in \Lambda$ we have $b_i=-a_i$ for every $i=1,...,n-1$. Therefore we obtain:
\begin{equation*}
\begin{split}
1&=\mu(f)= a_{n-1}+ \sum_{i=1}^{n-1} \delta_{i}(a_{i-1}-a_i)\\
&= \delta_1 a_0 + (1-\delta_{n-1})a_{n-1} + \sum_{i=1}^{n-2}a_i(\delta_{i+1}-\delta_i)\\
&\leq (|a_0|+\sum_{i=1}^{n-1}2|a_i|)\cdot \max\{\delta_1,\frac{1-\delta_{n-1}}{2}\}\\
&=(|a_0|+\sum_{i=1}^{n-1}2|a_i|)\cdot \frac{1}{2n-1}.
\end{split}
\end{equation*}
Hence $\|\mu\|\geq 2n-1$. This concludes the proof.
\end{proof}

Combining Theorem \ref{caratValdiviaGeneral} and Theorem \ref{1PlichkoValdivia} we have several examples of trees $T$, also with height bigger than $\omega_1\cdot \omega_0$, such that $C(T)$ is a $1$-Plichko space. However, in the final part of the proof of Theorem \ref{PlichkoTree}, the norming constant of that particular $\Sigma$-subspace grows as $2n-1$. This means that, in general, this is not the optimal choice for the $\Sigma$-subspace. This fact naturally rises the following question.
\begin{problem}
 Let $T$ be a tree with height equal to $\omega_1\cdot \omega_0$. Is $C(T)$ necessarily Plichko?
\end{problem}

\begin{remark}
We have assumed that every tree was rooted, the above results can be proved also if the tree $T$ has finitely many minimal elements. Indeed, if $T$ has finitely many minimal elements, then it can be viewed as the topological direct sum of rooted trees.
\end{remark}

\textbf{Acknowledgements:} The author is grateful to Ond\v{r}ej Kalenda for many helpful discussions.

Jacopo Somaglia\\

Dipartimento di Matematica\\
Universit\`{a} degli studi di Milano\\
Via C. Saldini, 50\\
20133 Milano MI, Italy\\
\\
Department of Mathematical Analysis,\\
Faculty of Mathematics and Physics,\\
Charles University,\\ 
Sokolovsk\'{a} 83,\\
186 75 Praha 8, Czech Republic\\
\\
jacopo.somaglia@unimi.it\\


\begin{thebibliography}{999}

\addcontentsline{toc}{chapter}{Bibliography}
\bibitem{BanaKu} T. Banakh, W. Kubi\'{s},
{\em Spaces of continuous functions over Dugundji compacta.} https://arxiv.org/abs/math/0610795
\bibitem{BohaHamKal} M. Bohata, J. Hamhalter, O.F.K. Kalenda,
{\em On Markushevich basis in preduals of von Neumann algebras,} Israel J. Math. \textbf{214} (2016), 867-884.
\bibitem{CorreaTausk} C. Correa, D.V. Tausk,
{\em Small Valdivia compacta and trees,} Studia Math. \textbf{235} (2016), no.2 117-135.
\bibitem{Cuth1} M. C\'{u}th,
{\em Noncommutative Valdivia compacta,} Comment. Math. Univ. Carolin. \textbf{55} (2014), no. 1, 53-72.
\bibitem{Cuth2} M. C\'{u}th,
{\em Simultaneous projectional skeletons,} J. Math. Anal. Appl. \textbf{441} (2014), no. 1 19-29.
\bibitem{CuthFab} M. C\'{u}th, M. Fabian,
{\em Rich Families and projectional skeletons in Asplund WCG spaces,} J. Math. Anal. Appl. \textbf{448} (2017), no. 2 1618-1632.
\bibitem{DeviGode} R. Deville, G. Godefroy,
{\em Some applications of projective resolutions of identity,} Proc. London Math. Soc. (3) \textbf{67} (1993), no. 1, 183-199.
\bibitem{DzaPle} M. D\v{z}amonja, G. Plebanek,
{\em Strictly positive measures on Boolean algebras,} J. Symbolic Logic \textbf{73} (2008), no. 4, 1416-1432.
\bibitem{Gruen2} G. Gruenhage,
{\em On a Corson compact space of Todor\v{c}evi\'{c},} Fund. Math. \textbf{126} (1986), no. 3, 261-268.
\bibitem{KKLP} K. K\k{a}kol, W. Kubi\'{s}, M. L\'{o}pez-Pellicer
{\em Descriptive topology in selected topics of functional analysis,} Developments in Mathematics, 24. Springer, New York, 2011.
\bibitem{Kalenda7} O. F. K. Kalenda,
{\em $M$-bases in spaces of continuous functions on ordinals,} Colloq. Math. \textbf{92} (2002), no. 2 179-187.
\bibitem{Kalenda3} O. F. K. Kalenda,
{\em Natural examples of Valdivia compact spaces,} J. Math. Anal. Appl. \textbf{340} (2008), no. 1, 81-101.
\bibitem{Kalenda8} O. F. K. Kalenda,
{\em On projectional skeletons in Va\v{s}\'{a}k spaces,} Comment. Math. Univ. Carolin. \textbf{58} (2017), no. 2, 173-182.
\bibitem{Kalenda2} O. F. K. Kalenda,
{\em Valdivia compact spaces in topology and Banach space theory,} Extracta Math. \textbf{15} (2000), no. 1, 1-85.
\bibitem{Kubis1} W. Kubi\'{s},
{\em Banach spaces with projectional skeletons,} J. Math. Anal. Appl. \textbf{350} (2009), no. 2, 758-776.
\bibitem{KubisMicha} W. Kubi\'{s}, H. Michalewski,
{\em Small Valdivia compact spaces,} Topol. Appl. \textbf{153} (2006), 2560-2573.
\bibitem{KubisUspe} W. Kubi\'{s}, V. Uspenskij,
{\em A compact group which is not Valdivia compact,} Proc. Amer. Math. Soc. \textbf{133} (8) (2005), 2483-2487.
\bibitem{Nyikos1} P. Nyikos,
{\em A Corson compact L-space from a Suslin tree,} Colloq. Math. \textbf{141} (2015), no. 2, 149-156. 
\bibitem{Nyikos2} P. Nyikos,
{\em Various topologies on trees.} Proceedings of the Tennessee Topology Conference (Nashville, TN, 1996), 167-198, World Sci. Publ., River Edge, NJ, 1997.\\
See also https://arxiv.org/pdf/math/0412554.pdf
\bibitem{Plichko1} A. N. Plichko,
{\em Projections of the identity operator and Markushevich bases,} Dokl. Akad. Nauk SSSR \textbf{263} (3) (1982), 543-546.
\bibitem{SomaTree} J. Somaglia, {\em New examples of non-commutative Valdivia compact spaces.} Fund. Math. 243 (2018), no. 2, 143-154. 
\bibitem{Todo3} S. Todor\v{c}evi\'{c},
{\em On the Lindel\"{o}f property of Aronszajn trees.} Proc. of the sixth topological symposium 1986, Research and exposition in Math., Berlin (1988), 577-588.
\bibitem{Todo1} S. Todor\v{c}evi\'{c},
{\em Trees and linearly ordered sets.} In: K. Kunen, J.E. Vaughan (Eds.), Handbook of Set-Theoretic Topology, North-Holland, Amsterdam, 1984, 235-293.
\bibitem{Vald1} M. Valdivia,
{\em Projective resolution of identity in $C(K)$ spaces,} Arch. Math. (Basel) \textbf{54} (5) (1990), 493-498.
\bibitem{Vald2} M. Valdivia,
{\em Simultaneous resolutions of the identity operator in normed spaces,} Collect. Math. \textbf{42} (3) (1991), 265-284.
\end{thebibliography}
\end{document}